\begin{document}

\theoremstyle{plain}
\newtheorem{theorem}{Theorem}[section]
\newtheorem{definition}[theorem]{Definition}
\newtheorem{lemma}[theorem]{Lemma}
\newtheorem{prop}[theorem]{Proposition}
\newtheorem{cor}[theorem]{Corollary}
\newtheorem{conjecture}[theorem]{Conjecture}
\theoremstyle{remark}
\newtheorem{remark}[theorem]{Remark}
\newtheorem{example}[theorem]{Example}

\newcommand{\reg}{\mathrm{reg}}
\newcommand{\charakt}{\mathrm{char}}
\newcommand{\diag}{\mathrm{diag}}
\newcommand{\Tor}{\mathrm{Tor}}
\newcommand{\im}{\mathrm{im}}
\newcommand{\coker}{\mathrm{coker}}
\newcommand{\id}{\mathrm{id}}
\newcommand{\length}{\mathrm{length}}
\newcommand{\LM}{\mathrm{LM}}
\newcommand{\LT}{\mathrm{LT}}
\newcommand{\cone}{\mathrm{cone}}
\newcommand{\ord}{\mathrm{ord}}
\newcommand{\Quot}{\mathrm{Quot}}
\newcommand{\Spec}{\mathrm{Spec}}
\newcommand{\height}{\mathrm{ht}}
\newcommand{\rank}{\mathrm{rank}}
\newcommand{\Ann}{\mathrm{Ann}}
\newcommand{\reynolds}{\mathcal{R}}
\newcommand{\maxId}{\mathfrak{m}}
\newcommand{\maxIdn}{\mathfrak{n}}
\newcommand{\primId}{\mathfrak{p}}

\title{Arithmetic invariants of pseudoreflection groups and regular graded algebras}
\author{David Mundelius \\ \small{Technische Universität München, Zentrum Mathematik - M11} \\ \small{Boltzmannstraße 3, 85748 Garching, Germany} \\ \small{\texttt{david.mundelius@tum.de}}}
\date{August 27, 2020}
\maketitle

\begin{abstract}
The main result of this paper is a generalization of the theorem of Chevalley-Shephard-Todd to the rings of invariants of pseudoreflection groups over Dedekind domains. In the special case of a principal ideal domain in which the group order is invertible it is proved that this ring of invariants is isomorphic to a polynomial ring. An intermediate result is that every finitely generated regular graded algebra over a Dedekind domain is isomorphic to a tensor product of blowup algebras.
\end{abstract}

\noindent \textbf{Keywords:} invariant theory, graded algebra, reflection group, Dedekind domain.

\section*{Introduction}

The famous theorem of Chevalley-Shephard-Todd states that the ring of invariants of a finite pseudoreflection group $G$ over a ground field in which $|G|$ is invertible is isomorphic to a polynomial ring over this field. The main goal of this paper is to generalize this result from ground fields to ground rings, more specifically to ground rings which are Dedekind domains. The restriction to Dedekind domains is natural, since in the important case of irreducible pseudoreflection groups over the complex numbers, every such group can be defined over the ring of integers of an algebraic number field. In the case where the ground ring is a principal ideal domain we find the especially nice result mentioned in the abstract. This covers many interesting cases; in particular, Feit \cite{feit2} observed that in all the exceptional cases in the classification of irreducible complex pseudoreflection groups by Shephard and Todd \cite{st}, this ring of integers is indeed a principal ideal domain. There is a proof of the theorem of Chevalley-Shephard-Todd due to Smith \cite{smithst}, which is based on the fact that a finitely generated graded algebra over a field is isomorphic to a polynomial ring if and only if its global dimension is finite. A Noetherian ring has finite global dimension if and only if it is regular, and the approach in this paper is based on a similar characterization of all finitely generated regular graded algebras over a Dedekind domain: every such algebra is isomorphic to a tensor product of blowup algebras, see the beginning of Section \ref{SectionBlowup} for the definition; over a principal ideal domain, such a tensor product is always a polynomial ring.

Section \ref{SectionBlowup} contains various basic properties of tensor products of blowup algebras. These are used in Section \ref{SectionAlgebras} to prove that every finitely generated regular graded algebra over a Dedekind domain $R$ is isomorphic to a tensor product of blowup algebras of ideals in $R$ (see Theorem \ref{StructureGradAlgDedekind}), and that such an algebra is isomorphic to a polynomial ring if and only if the degree-$d$-part is a free $R$-module for every $d \in \mathbb{N}$ (see Theorem \ref{StructureGradAlgFree}). In the final section we apply this to rings of invariants of finite pseudoreflection groups over Dedekind domains. The first result is that the ring of invariants of a pseudoreflection group over a Dedekind domain $R$ is isomorphic to a tensor product of blowup algebras if and only if the ring of invariants over the localization $R_\primId$ is a polynomial ring for every maximal ideal $\primId \subset R$. Here as in every other result of Section \ref{SectionInvariants} we can replace "tensor product of blowup algebras" by "polynomial ring" if we assume that $R$ is a principal ideal domain. Then we prove criteria under which rings of invariants over discrete valuation rings are polynomial rings (see Propositions \ref{PolyRingSameDeg} and \ref{PolyRingSameDegEquiv}). By putting these results together, we obtain that the ring of invariants of a pseudoreflection group over a Dedekind domain $R$ is isomorphic to a tensor product of blowup algebras if the group order is invertible in $R$ (see Theorem \ref{InvPseudoGenBlowup}); this is the direct generalization of the theorem of Chevalley-Shephard-Todd. In Theorem \ref{AInvInjGrp} we prove a somewhat more technical generalization of this result to Dedekind domains in which the group order is not invertible. We have no example of an invariant ring of a pseudoreflection groups over a Dedekind domain which is a tensorproduct of blowup algebras but not a polynomial ring and we conjecture that such a ring of invariants does not exist, see Conjecture \ref{ConjPseudoFree}.

\subsection*{Acknowledgements}

The results of this paper also appear in my dissertation \cite{dissMundelius}. I want to thank my Ph.D. advisor Gregor Kemper for proposing the interesting topic and for many fruitful discussions. Further I acknowledge the support from the graduate program TopMath of the Elite Network of Bavaria and the TopMath Graduate Center of TUM Graduate School at Technische Universität München.

\section{Blowup algebras} \label{SectionBlowup}

In this section we develop some basic results on blowup algebras and tensor products of these, which we will need to prove the structure theorem in the next section. We shall always assume that $R$ is a Dedekind domain and $I_1, \ldots, I_n$ are nonzero ideals in $R$. Although the results in this section are quite elementary, I did not find them in the literature. We first recall the definition of a blowup algebra (see \cite{eisenbud}): for a nonzero ideal $I \subseteq R$, the blowup algebra of $I$ in $R$ is the graded algebra $B_I R:= \bigoplus_{i \in \mathbb{N}_0} I^i$. We introduce the following notation for tensor products of blowup algebras:
\[ B_{I_1, \ldots, I_n} R:=B_{I_1} R \otimes_R \ldots \otimes_R B_{I_n} R. \]
We can make each blowup algebra $B_I R$ into a graded ring in several different ways, since we can choose an arbitrary integer $d>0$ and then assign the degree $d$ to all elements of $I$. When we want to make $B_{I_1, \ldots, I_n} R$ into a graded ring, then we can choose one of these gradings for each of the factors $B_{I_i} R$. When we write in the following that some graded ring $S$ is isomorphic to $B_{I_1, \ldots, I_n} R$, then this always means that for one of the gradings on $B_{I_1, \ldots, I_n} R$ defined above there is a graded isomorphism $S \cong B_{I_1, \ldots, I_n} R$; we use the same convention in the special case of polynomial rings $R[x_1, \ldots, x_n]$.

Our first step is to compute certain localizations of the algebras $B_{I_1, \ldots, I_n} R$, which we will need for our later results.

\begin{lemma} \label{LocalizGenBlowup}
Let $P \subseteq R$ be a prime ideal and $U:= R \backslash P$. Then we have $U^{-1} B_{I_1, \ldots, I_n} R \cong R_P[x_1, \ldots, x_n]$.
\end{lemma}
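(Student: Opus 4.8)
The plan is to reduce to the case $n=1$ and then exploit the fact that $R_P$ is a principal ideal domain. First I would use that localization commutes with tensor products over $R$: since $B_{I_1,\ldots,I_n}R = B_{I_1}R \otimes_R \cdots \otimes_R B_{I_n}R$, we get a graded $R_P$-algebra isomorphism
\[ U^{-1}B_{I_1,\ldots,I_n}R \cong U^{-1}B_{I_1}R \otimes_{R_P} \cdots \otimes_{R_P} U^{-1}B_{I_n}R. \]
Because $R_P[x_1,\ldots,x_n] \cong R_P[x_1] \otimes_{R_P} \cdots \otimes_{R_P} R_P[x_n]$ as graded $R_P$-algebras, it suffices to prove the case $n=1$, i.e.\ that $U^{-1}B_I R \cong R_P[x]$ for a single nonzero ideal $I \subseteq R$.

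For the case $n=1$, I would compute $U^{-1}B_I R = \bigoplus_{m \ge 0} U^{-1}I^m$, and then use that localization commutes with products of ideals, so $U^{-1}I^m = (U^{-1}I)^m = (IR_P)^m$. Now $R_P$ is either the quotient field of $R$ (if $P = 0$) or a discrete valuation ring (if $P \ne 0$), since $R$ is a Dedekind domain; in both cases $R_P$ is a principal ideal domain, so $IR_P = aR_P$ for some $a \in R_P$, and $a \ne 0$ since $I \ne 0$, hence $a$ is a nonzerodivisor. (If $P = 0$ or $P \not\supseteq I$ then $IR_P = R_P$ and one may take $a=1$; if $P \supseteq I$ then $a$ is a power of a uniformizer of $R_P$.) Consequently $(IR_P)^m = a^m R_P$ is a free $R_P$-module of rank $1$ for every $m$, and $U^{-1}B_I R = \bigoplus_{m\ge 0} a^m R_P$.

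Finally I would define the map $R_P[x] \to U^{-1}B_I R$ of graded $R_P$-algebras by sending $x$ (placed in degree $d$, matching the chosen grading on $B_I R$) to $a$ in the degree-$d$ component $IR_P$. In degree $md$ this is the $R_P$-linear map $R_P \to a^m R_P$, $r \mapsto ra^m$, which is bijective because $a$ is a nonzerodivisor; hence the map is a graded isomorphism. Combining with the reduction above yields $U^{-1}B_{I_1,\ldots,I_n}R \cong R_P[x_1,\ldots,x_n]$, where $x_i$ carries the degree chosen for the $i$-th factor.

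The argument is essentially formal once one has the right ingredients; the only points requiring a little care are the compatibility of localization with both the tensor product and with powers of ideals, and the bookkeeping of the gradings so that the isomorphism is graded for the grading specified by the convention in the text. The genuine mathematical input is simply that every localization of a Dedekind domain at a prime is a principal ideal domain, which makes each graded piece $(IR_P)^m$ free of rank one.
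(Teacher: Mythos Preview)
Your proof is correct and follows essentially the same route as the paper: reduce to $n=1$ via the compatibility of localization with tensor products, then use that $R_P$ is a principal ideal domain so that each graded piece $U^{-1}I^m$ is free of rank one over $R_P$. Your version is in fact slightly more explicit than the paper's, since you write down the generator $a$ and the map $x \mapsto a$ and check bijectivity degree by degree, whereas the paper simply observes that each $U^{-1}I_1^n$ is isomorphic to $R_P$ and invokes the fact that $B_{I_1}R$ is generated in its lowest positive degree to conclude.
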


\begin{proof}
Since $U^{-1}(B_{I_1} R \otimes_R \cdots \otimes_R B_{I_n} R) \cong (U^{-1} B_{I_1} R) \otimes_{U^{-1} R} \cdots \otimes_{U^{-1} R} (U^{-1} B_{I_n} R)$ we only have to consider the case $n=1$. In this case we have $U^{-1} B_{I_1} R = \oplus_{i \in \mathbb{N}_0} U^{-1} I_1^i$, and for every $i$, $U^{-1} I_1^i$ is isomorphic to a nonzero ideal in $R_P$. But $R_P$ is a principal ideal domain and thus $U^{-1} I_1^n$ is isomorphic to $R_P$ itself. Together with the fact that $B_{I_1} R$ is generated by its degree-$1$-part as an $R$-algebra, we obtain that indeed $ U^{-1} B_{I_1} R \cong R_P[x]$.
\end{proof}

If $I \neq(0)$ is a principal ideal, then the blowup algebra $B_I R$ is isomorphic to the polynomial ring $R[x]$. As an analogue to the well-known result that $\dim R[x_1, \ldots, x_n] = \dim R + n$, we now compute the Krull dimension of $B_{I_1, \ldots, I_n}$.

\begin{prop} \label{DimGenBlowup}
If $R$ is not a field, the Krull dimension of $B_{I_1, \ldots, I_n} R$ is $n+1$.
\end{prop}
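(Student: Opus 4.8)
\emph{Proof proposal.} The plan is to reduce the computation to the classical dimension theory of polynomial rings over Noetherian rings, using Lemma~\ref{LocalizGenBlowup} as the bridge. Write $S := B_{I_1,\ldots,I_n} R$. Since $R$ is a Dedekind domain that is not a field, $\dim R = 1$ and every nonzero prime of $R$ is maximal. I will use two standard facts: (i) for any commutative ring, $\dim S = \sup\{\dim S_\mathfrak{M} : \mathfrak{M}\subseteq S \text{ maximal}\}$, since the height of a prime of $S$ can be computed after localizing at a maximal ideal containing it; and (ii) $\dim A[x_1,\ldots,x_n] = \dim A + n$ for every Noetherian ring $A$, which I will apply to $A = R_P$ for primes $P\subseteq R$.

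For the upper bound, fix a maximal ideal $\mathfrak{M}\subseteq S$ and set $P := \mathfrak{M}\cap R$, a prime of $R$. With $U := R\setminus P$ one has $U\cap\mathfrak{M} = \emptyset$, so $S_\mathfrak{M}$ is a further localization of $U^{-1}S$ and hence $\dim S_\mathfrak{M}\le \dim U^{-1}S$. By Lemma~\ref{LocalizGenBlowup}, $U^{-1}S\cong R_P[x_1,\ldots,x_n]$, whose dimension is $\dim R_P + n \le \dim R + n = n+1$. Taking the supremum over all maximal ideals $\mathfrak{M}$ of $S$ gives $\dim S\le n+1$.

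For the lower bound, choose a maximal ideal $P\subseteq R$; this is exactly the point where the hypothesis that $R$ is not a field enters, since it guarantees the existence of a nonzero (hence maximal) prime, so that $R_P$ is a discrete valuation ring with $\dim R_P = 1$. Again by Lemma~\ref{LocalizGenBlowup}, $U^{-1}S \cong R_P[x_1,\ldots,x_n]$ has dimension $n+1$ by fact (ii), so it admits a chain of primes of length $n+1$; since the primes of $U^{-1}S$ correspond bijectively and order-preservingly to the primes of $S$ disjoint from $U$, this chain lifts to a chain of length $n+1$ in $S$, whence $\dim S\ge n+1$. Combining the two estimates yields $\dim S = n+1$. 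I do not expect a genuine obstacle here: the only steps needing a little care are the passages between $\dim S$ and the dimensions of its localizations (both at maximal ideals of $S$ and at the multiplicative set $U\subseteq R$), and the bookkeeping that the base $R$ contributes precisely $\dim R = 1$ — which is exactly what would (and should) fail in the field case excluded by hypothesis.
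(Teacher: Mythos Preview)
Your argument is correct. The lower bound is exactly the paper's: pick a nonzero prime $P\subset R$, invoke Lemma~\ref{LocalizGenBlowup} to identify $U^{-1}S$ with $R_P[x_1,\ldots,x_n]$, and read off a chain of length $n+1$.

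For the upper bound the paper takes a genuinely different and heavier route. It proceeds by induction on $n$: writing $S = T\otimes_R B_{I_n}R$ with $T := B_{I_1,\ldots,I_{n-1}}R$ and $\varphi:T\to S$ the natural map, it shows $\height(Q)\le \height(\varphi^{-1}(Q))+1$ for every prime $Q\subset S$ by computing the fiber ring $\Quot(T/\varphi^{-1}(Q))\otimes_T S$ explicitly and checking that it is a quotient of a univariate polynomial ring over a field, then appealing to a general fiber-dimension inequality (cited from \cite{kemperca}). Your approach sidesteps all of this: since Lemma~\ref{LocalizGenBlowup} is stated for \emph{every} prime $P\subseteq R$, it already tells you that each localization $S_\mathfrak{M}$ sits inside some $R_P[x_1,\ldots,x_n]$ with $\dim R_P\le 1$, so the upper bound is immediate from the standard dimension formula for Noetherian polynomial rings. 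This is shorter and more elementary; the paper's fiber-ring computation, by contrast, is the kind of argument that would still work in situations where no global identification of $U^{-1}S$ with a polynomial ring is available.
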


\begin{proof}
Let $P$ be a nonzero prime ideal in $R$. By Lemma \ref{LocalizGenBlowup} and the fact that $R_P$ is a discrete valuation ring we have $\dim B_{I_1, \ldots, I_n} R \geq \dim R_P[x_1, \ldots, x_n] = n+1$. We prove the reverse inequality by induction on $n$: the case $n=0$ is trivial. For $n>0$ we write $S:=B_{I_1, \ldots, I_n} R$ and $T:=B_{I_1, \ldots, I_{n-1}} R$. So by induction we have $\dim T \leq n$, and we want to show $\dim S \leq n+1$. The definitions imply $S=T \otimes_R B_{I_n} R$, and we define a map $\varphi: T \to S, a \mapsto a \otimes 1$. Now let $Q \subset S$ be a prime ideal, and define $P:=\varphi^{-1}(Q)$. It is sufficient to prove that we have $\height(Q) \leq \height(P) +1$. Since $S$ and $T$ are finitely generated $R$-algebras and hence Noetherian, this will follow if we can show that the dimension of the fiber ring $\Quot(T/P) \otimes_T S$ is at most one (see e.g. Kemper \cite[Theorem 7.12]{kemperca}).

We have $\Quot(T/P) \otimes_T S= \Quot(T/P) \otimes_T (T \otimes_R B_{I_n} R)= \Quot(T/P) \otimes_R B_{I_n} R= \bigoplus_{m \in \mathbb{N}_0} (\Quot(T/P) \otimes_R I_n^m)$. As a $K:=\Quot(T/P)$-algebra, this ring is generated by the summand with $m=1$, which is $K \otimes_R I_n$. This is a subset of $K \otimes_R R \cong K$, so its dimension as a $K$-vectorspace is at most one. Hence the $K$-algebra $K \otimes_R B_{I_n} R$ is isomorphic to a quotient of $K[x]$ and hence its Krull dimension is indeed at most one.
\end{proof}

Our next goal is to prove that under our general assumptions $B_{I_1, \ldots, I_n} R$ is always a regular ring. We begin with a well-known lemma, which will be important again in the next section. Since I did not find a proof of this statement in the literature, I give a proof here for convenience.

\begin{lemma} \label{RegHomMaxId}
Let $S=\oplus_{i \geq 0} S_i$ be a graded ring.
\begin{compactenum}[a)]
\item $S$ is regular if and only if the localization $S_\maxId$ is regular for every homogeneous maximal ideal $\maxId \subset S$.
\item The homogeneous maximal ideals in $S$ are precisely the ideals of the form $(\primId, S_+)_S$ where $\primId$ is a maximal ideal in $S_0$.
\end{compactenum}
\end{lemma}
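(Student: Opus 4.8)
The plan is to handle the two parts in the natural order, proving (b) first since the characterization of homogeneous maximal ideals is needed to make sense of the statement in (a), and then deducing (a) from (b) together with standard facts about regularity and localization.

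For part (b), I would argue as follows. Let $\maxId \subset R$ be a homogeneous maximal ideal. Since $\maxId$ is homogeneous, $\maxId \cap R_0$ is a homogeneous ideal of $R_0$, hence just an ideal $\primId$ of $R_0$, and one checks it is prime. I claim $\primId$ is maximal and $\maxId = (\primId, R_+)_R$. The inclusion $(\primId, R_+)_R \subseteq \maxId$ is clear once we know $R_+ \subseteq \maxId$: if some homogeneous $f \in R_+$ of positive degree were not in $\maxId$, then since $\maxId$ is homogeneous, $R/\maxId$ would be a field in which the image of $f$ is a unit, but $f$ is nilpotent modulo... — more cleanly, the key point is that $(\primId, R_+)_R$ is already a maximal ideal: the quotient $R/(\primId,R_+)_R \cong R_0/\primId$ must be a field, so $\primId$ being prime forces it to be maximal \emph{provided} $(\primId,R_+)_R$ is maximal, and conversely any homogeneous maximal ideal $\maxId$ contains $(\maxId\cap R_0, R_+)_R$, which by the above is already maximal, so they coincide. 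The cleanest way to organize this: show first that for any maximal ideal $\primId \subset R_0$ the ideal $(\primId, R_+)_R$ is maximal (because the quotient is the field $R_0/\primId$) and obviously homogeneous; then show conversely that any homogeneous maximal ideal $\maxId$ contains $R_+$ (because $R/\maxId$ is a field and is a quotient of $R_0$ if... — here I would use that a homogeneous ideal which is maximal among homogeneous ideals and whose quotient has no nonzero homogeneous proper ideals must contain all of $R_+$, since otherwise one could enlarge it), whence $\maxId = (\maxId \cap R_0, R_+)_R$ with $\maxId \cap R_0$ necessarily maximal.

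For part (a), the forward direction is immediate: if $R$ is regular then every localization is regular, in particular $R_\maxId$ for $\maxId$ homogeneous. For the converse, suppose $R_\maxId$ is regular for every homogeneous maximal ideal. Recall that $R$ is regular by definition iff $R_\mathfrak{q}$ is regular for every prime $\mathfrak{q}$, equivalently for every maximal ideal. Given an arbitrary maximal ideal $\mathfrak{q} \subset R$, let $\mathfrak{q}^* $ denote the largest homogeneous ideal contained in $\mathfrak{q}$ (generated by the homogeneous components of elements of $\mathfrak{q}$); this is prime, and one knows (this is the standard lemma on $*$-local rings, e.g. in Bruns–Herzog) that if $\mathfrak{q}$ is not itself homogeneous then $R_\mathfrak{q}$ is a localization of $R_{\mathfrak{q}^*}[t]$ or more precisely there is a prime $\mathfrak{q}^* \subsetneq \mathfrak{q}$ with $R_\mathfrak{q}$ obtained from $(R/\mathfrak{q}^*)$-data in such a way that regularity of $R_{\mathfrak{q}^*}$ implies regularity of $R_\mathfrak{q}$. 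So it suffices to prove $R_{\mathfrak{q}^*}$ is regular for every homogeneous prime $\mathfrak{q}^*$. Such a $\mathfrak{q}^*$ is contained in some homogeneous maximal ideal $\maxId$ (take $\mathfrak{q}^* \cap R_0$, extend to a maximal ideal $\primId$ of $R_0$ by part (b)'s description — one must check $\mathfrak{q}^* \subseteq (\primId, R_+)_R$, which holds since $\mathfrak{q}^* = (\mathfrak{q}^*\cap R_0, \mathfrak{q}^* \cap R_+)$ and $\mathfrak{q}^*\cap R_0 \subseteq \primId$), and $R_{\mathfrak{q}^*}$ is a localization of $R_\maxId$, hence regular by hypothesis. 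This completes the argument.

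The main obstacle I anticipate is the reduction in part (a) from an arbitrary maximal ideal to a homogeneous prime — i.e. the fact that passing from $R_\mathfrak{q}$ to $R_{\mathfrak{q}^*}$ preserves regularity in the required direction. This is the content of the theory of graded (or $*$-local) rings: one shows $\dim R_\mathfrak{q} = \dim R_{\mathfrak{q}^*} + (\text{0 or 1})$ and that $\mathfrak{q} R_\mathfrak{q}$ is generated by $\mathfrak{q}^* R_\mathfrak{q}$ together with at most one more element forming a regular sequence, so a regular system of parameters upstairs lifts one downstairs. I would either cite Bruns–Herzog (Section 1.5) for this or, if a self-contained argument is wanted, reduce to the case $R = R_0[t]$-like behaviour after localizing; in any case this graded-localization input, rather than anything in part (b), is where the real work sits.
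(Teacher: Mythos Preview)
Your approach is correct and essentially the same as the paper's: for (b) you both show that a homogeneous maximal ideal $\maxId$ satisfies $\maxId \subseteq (\maxId\cap R_0,\,R_+)_R$ with the right-hand side proper, forcing equality and maximality of $\maxId\cap R_0$; for (a) you both reduce to regularity at homogeneous primes and then observe that every homogeneous prime sits inside some $(\primId,R_+)_R$. The only cosmetic difference is that the paper black-boxes the reduction from arbitrary primes to homogeneous primes by citing Bruns--Herzog \cite[Exercise 2.2.24]{bh} outright, whereas you sketch the $\mathfrak{q}\mapsto\mathfrak{q}^*$ mechanism behind that exercise before ultimately citing the same source.
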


\begin{proof}
\begin{compactenum}[a)]
\item By Bruns and Herzog \cite[Exercise 2.2.24]{bh} we know that $S$ is regular if and only if $S_\primId$ is regular for every homogeneous prime ideal $\primId \subset S$. This together with the definition of regularity already proves the "only if"-part of the statement and for the "if"-part it only remains to show that every homogeneous prime ideal is contained in a homogeneous maximal ideal; then the statement follows from the fact that localizations of regular rings are again regular. So let $\primId \subset S$ be a homogeneous prime ideal. Then $\primId \cap S_0$ is a proper ideal in $S_0$ and thus contained in a maximal ideal $\maxIdn \subset S_0$. Now we define $\maxId:=(\maxIdn, S_+)_S$. This is clearly a homogeneous ideal, and it is maximal because $S/ \maxId \cong S_0 / \maxIdn$. It is also clear that $\primId \subseteq \maxId$.
\item Let $\maxId \subset S$ be a homogeneous maximal ideal. Let $\primId := \maxId \cap S_0$; this defines a prime ideal in $S_0$. Then $\maxIdn:=(\primId, S_+)_S$ is a proper ideal in $S$ with $\maxId \subseteq \maxIdn$, and hence $\maxId = \maxIdn$ since $\maxId$ is a maximal ideal. Now $S/ \maxIdn \cong S_0 / \primId$ and thus $\maxIdn$ is maximal if and only if $\primId$ is a maximal ideal in $S_0$.
\end{compactenum}
\vspace{-1em}
\end{proof}

Now we can prove the following result:

\begin{lemma} \label{GenBlowupRegular}
Under our general assumptions in this section, the ring $B_{I_1, \ldots, I_n} R$ is always a regular ring.
\end{lemma}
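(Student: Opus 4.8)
The plan is to reduce to the already-known polynomial case by means of Lemma \ref{RegHomMaxId}. Write $B:=B_{I_1,\ldots,I_n}R$, so that $B_0=R$. By part a) of that lemma it suffices to prove that the localization $B_\maxId$ is regular for every homogeneous maximal ideal $\maxId\subset B$; by part b), every such $\maxId$ is of the form $(\primId,B_+)_B$ for a maximal ideal $\primId\subset R$, and the proof of part b) shows moreover that $\maxId\cap R=\primId$.

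Fix such an $\maxId$ and set $U:=R\setminus\primId$. Since $\maxId\cap R=\primId$, no element of $U$ lies in $\maxId$, so $U^{-1}\maxId$ is a prime ideal of $U^{-1}B$ and $B_\maxId$ is the further localization $(U^{-1}B)_{U^{-1}\maxId}$. On the other hand, Lemma \ref{LocalizGenBlowup} identifies $U^{-1}B$ with the polynomial ring $R_\primId[x_1,\ldots,x_n]$. Now $R_\primId$ is the localization of a Dedekind domain at a maximal ideal, hence a discrete valuation ring (or, in the degenerate case where $R$ is itself a field, a field), and in either case a regular Noetherian ring; therefore $R_\primId[x_1,\ldots,x_n]$ is regular. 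Since any localization of a regular ring is regular, $B_\maxId$ is regular, and Lemma \ref{RegHomMaxId} a) then yields that $B=B_{I_1,\ldots,I_n}R$ is regular.

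I do not expect a serious obstacle: the ring $B$ is a finitely generated algebra over the Noetherian ring $R$, hence Noetherian, and the whole argument is a chain of standard facts (localization of a blowup algebra from Lemma \ref{LocalizGenBlowup}, regularity of a polynomial ring over a regular Noetherian ring, stability of regularity under localization). The only point that deserves a moment's care is the two-step localization bookkeeping — specifically verifying that $\maxId$ survives in $U^{-1}B$, which is exactly where the identity $\maxId\cap R=\primId$ from the proof of Lemma \ref{RegHomMaxId} b) is needed, and that $(U^{-1}B)_{U^{-1}\maxId}$ is genuinely $B_\maxId$.
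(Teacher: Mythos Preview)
Your proof is correct and follows essentially the same route as the paper's: reduce via Lemma \ref{RegHomMaxId} to homogeneous maximal ideals $\maxId=(\primId,B_+)$, observe that $B_\maxId$ is a localization of $(R\setminus\primId)^{-1}B\cong R_\primId[x_1,\ldots,x_n]$ by Lemma \ref{LocalizGenBlowup}, and conclude from regularity of polynomial rings over the regular ring $R_\primId$ together with stability of regularity under localization. Your version is slightly more explicit about the two-step localization bookkeeping and the degenerate field case, but there is no substantive difference in strategy.
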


\begin{proof}
We write $S:=B_{I_1, \ldots, I_n} R$. From Lemma \ref{RegHomMaxId} we know that we only have to prove that $S_\maxId$ is regular for every ideal $\maxId \subset S$ of the form $\maxId =(\primId, S_+)_S$ where $\primId$ is a maximal ideal in $R$. In this case $S_\maxId$ is a localization of $(R \backslash \primId)^{-1} S$, and by Lemma \ref{LocalizGenBlowup} this last ring is isomorphic to the polynomial ring $R_\primId[x_1, \ldots, x_n]$, which is regular because $R_\primId$ is regular. Now the claim follows from the fact that localizations of regular rings are again regular.
\end{proof}

\section{Regular graded algebras} \label{SectionAlgebras}

In this section, a graded $R$-algebra shall always mean a graded ring $S=\bigoplus_{i \in \mathbb{N}} S_i$ such that $S_0 \cong R$. The main goal of this section is to prove the following theorem:

\begin{theorem} \label{StructureGradAlgDedekind}
Let $R$ be a Dedekind domain and let $S$ be a finitely generated graded $R$-algebra. Then the following two statements are equivalent:
\begin{compactenum}[(i)]
\item $S$ is regular.
\item There exist nonzero ideals $I_1, \ldots, I_n$ in $0$ (not necessarily distinct) such that $S \cong B_{I_1, \ldots, I_n} R$.
\end{compactenum}
\end{theorem}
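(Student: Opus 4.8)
The direction (ii) $\Rightarrow$ (i) is precisely Lemma \ref{GenBlowupRegular}, so the work is in (i) $\Rightarrow$ (ii). When $R_0$ is a field this is the classical result of Smith \cite{smithst} that a finitely generated regular graded algebra over a field is a polynomial ring, so I may assume $R_0$ is not a field. I will also record at the outset that $R$ is a normal domain: $R$ is regular, hence reduced and normal (it is normal at every prime), and its idempotents, obtained by comparing lowest homogeneous components in $e = e^2$, all lie in the domain $R_0$; so $\Spec R$ is connected and $R$ is a normal domain, and the same then holds for every localization of $R$.

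The heart of the argument is a local statement: \emph{for each maximal ideal $\primId \subset R_0$, writing $D := (R_0)_\primId$ (a discrete valuation ring with uniformizer $t$) and $R' := (R_0 \setminus \primId)^{-1}R$, one has a graded isomorphism $R' \cong D[x_1, \ldots, x_n]$.} To see this, note first that $\maxId := (t, R'_+)_{R'}$ is the unique homogeneous maximal ideal of $R'$ by Lemma \ref{RegHomMaxId}, and that $t \notin \maxId^2 R'_\maxId$: since $\maxId^2 = t^2 R' + t R'_+ + (R'_+)^2$ has degree-zero part $t^2 D$, any $s \notin \maxId$ has a unit degree-zero component, so $ts \in \maxId^2$ is impossible. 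Hence $t$ is part of a regular system of parameters of the regular local ring $R'_\maxId$, so $R'_\maxId / t R'_\maxId$ is regular; as $\maxId/tR'$ is the unique homogeneous maximal ideal of $R'/tR'$, Lemma \ref{RegHomMaxId} shows $R'/tR'$ is a finitely generated regular graded algebra over the field $D/tD$, hence a polynomial ring by Smith's theorem. Lifting its variables to homogeneous elements $x_i \in R'$ gives a graded $D$-algebra map $\varphi' \colon D[x_1, \ldots, x_n] \to R'$ which is an isomorphism modulo $t$, hence surjective by the graded Nakayama lemma; and since $R'$ is $D$-free (each graded piece is finitely generated and torsion free over the DVR $D$), we get $\Tor_1^D(R', D/tD) = 0$, so tensoring $0 \to \ker \varphi' \to D[x_1,\ldots,x_n] \to R' \to 0$ with $D/tD$ forces $\ker \varphi' = t \ker \varphi'$, whence $\ker \varphi' = 0$.

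Granting this, $M := R_+/R_+^2$ is finitely generated over $R_0$ and, by the local statement, its localization at every maximal ideal of $R_0$ is free; hence $M$ is projective of some constant rank $n$, and the structure theory of projective modules over the Dedekind domain $R_0$, applied in each degree, yields nonzero ideals $I_1, \ldots, I_n \subseteq R_0$ and integers $d_i > 0$ with $M \cong \bigoplus_{i=1}^n I_i(-d_i)$ as graded $R_0$-modules. Set $S := B_{I_1, \ldots, I_n} R_0$, with $I_i$ placed in degree $d_i$. Since the $I_i$ are invertible, $B_{I_i}R_0 \cong \mathrm{Sym}_{R_0}(I_i)$, so a graded $R_0$-algebra homomorphism $S \to R$ is the same datum as a graded $R_0$-linear map $\bigoplus_i I_i(-d_i) \to R_+$; I take the one coming from a graded splitting of the surjection $R_+ \twoheadrightarrow M$, which exists because $M$ is graded projective. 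The resulting $\varphi \colon S \to R$ induces the identity on $M = S_+/S_+^2 = R_+/R_+^2$; from $\varphi(S) + R_+^2 = R$ one then gets $\varphi(S) = R$ by induction on degree, so $\varphi$ is surjective.

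Finally, $\varphi$ is injective. Fix a maximal ideal $\primId \subset R_0$; by Lemma \ref{LocalizGenBlowup} and the local statement, both $S_\primId$ and $R_\primId$ are polynomial rings over $(R_0)_\primId$ in $n$ variables, hence domains of Krull dimension $n+1$. Since $\varphi_\primId$ is surjective and $S_\primId/\ker \varphi_\primId \cong R_\primId$ still has dimension $n+1$ while $S_\primId$ is a domain, the prime $\ker \varphi_\primId$ must be zero (a nonzero prime in a Noetherian domain of finite dimension strictly lowers the dimension). As $\ker \varphi$ is graded with finitely generated homogeneous components over $R_0$, all of which vanish after localizing at every maximal ideal, $\ker \varphi = 0$, so $\varphi \colon B_{I_1, \ldots, I_n}R_0 \xrightarrow{\sim} R$. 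The step I expect to be most delicate is the local statement, and within it the passage from $R'$ to $R'/tR'$: it is exactly the homogeneity of $\maxId$ that makes the uniformizer $t$ part of a regular system of parameters, so that regularity survives the reduction, and one still has to control the lift of the polynomial generators back to $R'$, which is where the freeness of $R'$ over $D$ and the vanishing of $\Tor_1^D$ come in.
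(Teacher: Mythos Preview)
Your proof is correct but follows a genuinely different route from the paper's. The paper proceeds by induction on $\dim R$: it splits off a single ideal summand $I$ from the lowest nonzero graded piece $R_d$, sets $S = R/(I)_R$, and the technical heart is a direct homogeneous-component calculation showing that a local generator $g$ of $I$ lies outside $\maxIdn^2$ at each homogeneous maximal ideal $\maxIdn$, so that $S$ is again regular and the induction hypothesis applies; one then lifts the isomorphism $S \cong B_{I_1,\ldots,I_m}R_0$ and extends it by $I_{m+1} := I$. You instead first establish a local statement (over each DVR $(R_0)_\primId$ the ring is already a polynomial ring, obtained by reducing modulo the uniformizer to the field case and lifting via graded Nakayama plus a $\Tor_1$ argument), and then globalize in a single step by reading off all of the $I_i$ at once from the projective cotangent module $R_+/R_+^2$ and using $B_I R_0 \cong \mathrm{Sym}_{R_0}(I)$ for invertible $I$ to build $\varphi$ directly. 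Your approach separates the local regularity question from the global patching cleanly and replaces the paper's explicit $g \notin \maxIdn^2$ verification with standard homological tools; the paper's induction, by contrast, is more elementary and self-contained in the sense that, once $R_0$ is not a field, it never appeals to the field case of the theorem, whereas your local statement uses it as an essential input.
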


Since for a nonzero principal ideal $I \subseteq R$ we have $B_I R \cong R[x]$, the following Corollary is an immediate consequence of Theorem \ref{StructureGradAlgDedekind}:

\begin{cor} \label{StructureGradAlgPID}
Let $R$ be a principal ideal domain and let $S$ be a finitely generated graded $R$-algebra. Then the following two statements are equivalent:
\begin{compactenum}[(i)]
\item $S$ is regular.
\item $S$ is isomorphic to a polynomial ring $R[x_1, \ldots, x_n]$.
\end{compactenum}
\end{cor}

We will also prove the following theorem, which generalizes Corollary \ref{StructureGradAlgPID} in another direction than Theorem \ref{StructureGradAlgDedekind}.

\begin{theorem} \label{StructureGradAlgFree}
Let $R$ be a Dedekind domain and let $S=\bigoplus_{i \in \mathbb{N}} S_i$ be a finitely generated graded $R$-algebra. Then the following two statements are equivalent:
\begin{compactenum}[(i)]
\item $S$ is regular and for every $i$, $S_i$ is free as an $R$-module.
\item $S$ is isomorphic to a polynomial ring $R[x_1, \ldots, x_n]$.
\end{compactenum}
\end{theorem}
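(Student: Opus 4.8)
The implication $(ii)\Rightarrow(i)$ is routine and I would dispose of it first: a polynomial ring over a regular Noetherian ring is again regular, and with respect to any grading the degree-$e$ part of $R_0[x_1,\ldots,x_n]$ is the free $R_0$-module on the monomials of degree $e$. All the content is in $(i)\Rightarrow(ii)$.

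For $(i)\Rightarrow(ii)$ the plan is to feed $R$ into Theorem \ref{StructureGradAlgDedekind} and then induct on the number $n$ of blowup factors it produces. If $R_0$ is a field, every nonzero ideal equals $R_0$, so the conclusion of Theorem \ref{StructureGradAlgDedekind} already reads $R\cong R_0[x_1,\ldots,x_n]$; hence I may assume $R_0$ is not a field and write $R=B_{I_1,\ldots,I_n}R_0$ with $I_j$ placed in degree $d_j$. The first observation is that for $e_0:=\min_j d_j$ no product of algebra generators can land in degree $e_0$, so the degree-$e_0$ part of $R$ equals $\bigoplus_{j \,:\, d_j=e_0}I_j$, which is a free $R_0$-module by hypothesis $(i)$. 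The key lemma I would then isolate is: if $J_1,\ldots,J_k$ are nonzero ideals of $R_0$ all placed in a common degree $d$ and $\bigoplus_j J_j$ is $R_0$-free, then $B_{J_1,\ldots,J_k}R_0\cong R_0[x_1,\ldots,x_k]$. Its proof: $B_{J_1,\ldots,J_k}R_0$ is generated as an $R_0$-algebra by its degree-$d$ part $\bigoplus_j J_j$, so an $R_0$-basis $g_1,\ldots,g_k$ of that module gives a graded surjection $R_0[x_1,\ldots,x_k]\to B_{J_1,\ldots,J_k}R_0$, $x_i\mapsto g_i$, which is forced to be injective because both rings are integral domains (Lemmas \ref{GenBlowupRegular} and \ref{RegIntegral}) of Krull dimension $\dim R_0+k$ (Proposition \ref{DimGenBlowup}) --- exactly the dimension argument already used at the end of the proof of Theorem \ref{StructureGradAlgDedekind}.

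Applying this lemma to the block of lowest-degree generators (reorder so that $d_1=\cdots=d_k=e_0$ and $d_j>e_0$ for $j>k$, and note $\bigoplus_{j=1}^k I_j=R_{e_0}$ is free) yields a graded isomorphism
\[ R\;\cong\;B_{I_1,\ldots,I_k}R_0\otimes_{R_0}B_{I_{k+1},\ldots,I_n}R_0\;\cong\;S[x_1,\ldots,x_k],\qquad S:=B_{I_{k+1},\ldots,I_n}R_0, \]
with the $x_i$ in degree $e_0$. To run the induction I must verify that $S$ again satisfies hypothesis $(i)$: regularity is Lemma \ref{GenBlowupRegular}, and freeness of every $S_e$ I would obtain by a secondary induction on $e$ --- the degree-$e$ part of $S[x_1,\ldots,x_k]$ is $S_e\oplus\bigoplus_{m\ge 1}S_{e-me_0}^{\oplus c_m}$ for certain positive integers $c_m$, it is free because it is a graded piece of $R$, and the modules $S_{e-me_0}$ with $m\ge 1$ are free by the inner induction hypothesis; hence $S_e$ is a direct summand of a free module with vanishing Steinitz class, so $S_e$ is free. (Here I use that over a Dedekind domain a finitely generated projective module is free if and only if its Steinitz class is trivial, and that the Steinitz class is additive along short exact sequences of projectives.) Since $S$ has fewer blowup factors, the outer induction gives $S\cong R_0[y_1,\ldots,y_{n-k}]$, whence $R\cong R_0[x_1,\ldots,x_k,y_1,\ldots,y_{n-k}]$; the base case $n=0$ is $R=R_0$.

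The step I expect to be the main obstacle is the key lemma together with keeping the gradings straight: one has to recognise that $B_{I_1,\ldots,I_n}R_0$ can be a polynomial ring even though the individual $I_j$ need not be principal, the point being that freeness of the graded pieces forces each block of equal-degree ideals to assemble into a free $R_0$-module, and the polynomial generators then must be chosen as an $R_0$-basis of a graded piece of $R$ rather than as the naive blowup variables $x_1,\ldots,x_n$. Checking that the residual factor $S$ still has free graded pieces is the other spot that needs care, but the secondary induction above handles it.
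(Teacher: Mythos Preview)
Your argument is correct, but it takes a genuinely different route from the paper's. The paper does \emph{not} invoke Theorem~\ref{StructureGradAlgDedekind} as a black box; instead it reruns the same induction on $\dim R$ used there: pick the smallest $d>0$ with $R_d\neq 0$, use the freeness hypothesis to choose a single basis element $g_1\in R_d$, set $S:=R/(g_1)$, re-verify regularity of $S$ exactly as in the proof of Theorem~\ref{StructureGradAlgDedekind}, and then show each $S_m$ is free from the split exact sequence $0\to J_m\to R_m\to S_m\to 0$ with $J_m\cong R_{m-d}$ free (the same Steinitz-class trick you use, but for a rank-one summand). So the paper peels off one polynomial generator at a time and redoes the regularity check at every step.

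Your approach instead cashes in Theorem~\ref{StructureGradAlgDedekind} up front to obtain $R\cong B_{I_1,\ldots,I_n}R_0$ and then inducts on~$n$, splitting off an entire polynomial subalgebra via the tensor decomposition and your key lemma. This is slicker in that regularity of the residual factor $S$ comes for free from Lemma~\ref{GenBlowupRegular}, and it makes transparent that Theorem~\ref{StructureGradAlgFree} is really Theorem~\ref{StructureGradAlgDedekind} plus a freeness-propagation argument. The paper's approach, by contrast, is more self-contained (it does not logically depend on Theorem~\ref{StructureGradAlgDedekind}) and avoids the need to isolate your key lemma, at the price of repeating the local regularity computation. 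Both proofs ultimately rest on the same structure-theorem/Steinitz-class observation that a projective complement of a free module inside a free module is itself free.
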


In the remainder of this section we prove Theorems \ref{StructureGradAlgDedekind} and \ref{StructureGradAlgFree}. These are well known in the case that $R$ is a field, see for example Bruns and Herzog \cite[Exercise 2.2.25]{bh}. We begin the proof of the theorems with the following lemma:

\begin{lemma} \label{RegIntegral}
Let $R$ be a Dedekind domain and let $S=\bigoplus_{i \in \mathbb{N}} S_i$ be a finitely generated graded $R$-algebra. Assume that $S$ is regular. Then $S$ is an integral domain; in particular, all $S_i$ are torsion-free $R$-modules.
\end{lemma}

\begin{proof}
We first prove that $S$ is torsion-free as an $R$-module. For $f \in S \backslash \{ 0 \}$ we define $I_f:=\{ a \in R : a f=0 \}$; so we want to show that $I_f=\{ 0 \}$ for every $f$. It is sufficient to prove this in the case that $f$ is homogeneous. The set $I_f$ is clearly a proper ideal in $R$, so there exists a maximal ideal $\maxIdn_f \subset R$ which contains $I_f$. Next we define $\maxId_f:=(\maxIdn_f \cup S_+)_S$. We have $S/\maxId_f \cong R / \maxIdn_f$ and thus $\maxId_f$ is a maximal ideal in $S$; in particular, we can define the localization $S_{\maxId_f}$ and this is a regular local ring and hence an integral domain. Let $\varepsilon$ be the canonical map $S \to S_{\maxId_f}$. For every $a \in I_f$ we have $\varepsilon(a) \cdot \varepsilon(f) = \varepsilon(0)=0$ and hence $\varepsilon(a)=0$ or $\varepsilon(f)=0$ because $S_{\maxId_f}$ is an integral domain. If $\varepsilon(a)=0$, there exists $c \in U:= S \backslash \maxId_f$ such that $c \cdot a =0$; let $c_0$ denote the degree-$0$-part of $c$. Then we have $c_0 \cdot a =0$, but $c_0 \neq 0$ because $c \notin S_+ \subseteq \maxId_f$. Since $R$ is an integral domain, this implies $a =0$. In the other case, if $\varepsilon(f)=0$, there exists $c \in U$ such that $c \cdot f=0$. Again we define $c_0$ to be the degree-$0$-part of $c$. Then we have $c_0 \cdot f=0$ because $f$ is homogeneous and hence obtain $c_0 \in I_f \subseteq \maxIdn_f \subset \maxId_f$. Together with $c-c_0 \in S_+ \subseteq \maxId_f$, this implies $c \in \maxId_f$ which is a contradiction. So we finally get $I_f = \{ 0 \}$ for every $f \in S \setminus \{ 0 \}$, and thus $S$ is a torsion-free $R$-module.

Now we prove that $S$ is an integral domain. So assume there exist $r,s \in S \backslash \{ 0 \}$ such that $r \cdot s =0$. Since $R$ is an integral domain, $\primId:=S_+$ is a prime ideal in $S$. Let $\eta$ denote the canonical homomorphism $S \to S_\primId$. Since $S$ is regular, $S_\primId$ is a regular local ring and hence an integral domain, so we obtain either $\eta(r)=0$ or $\eta(s)=0$. Without loss of generality, we may assume $\eta(r)=0$. This implies that there is a $t \in S \backslash \primId$ such that $r \cdot t=0$. We write $r=\sum_{i \in \mathbb{N}} r_i$ and $t=\sum_{i \in \mathbb{N}} t_i$ with $r_i, t_i \in S_i$ for every $i$. Let $d \in \mathbb{N}_0$ be the smallest degree such that $r_d \neq 0$. Since $t \notin \primId$, we have $t_0 \neq 0$. Then the degree-$d$-part of $r \cdot t$ is $r_d t_0$, and thus $r \cdot t=0$ implies $r_d \cdot t_0=0$. But we have $t_0 \in R$, and since we have already proved that $S$ is a torsion-free $R$-module, this implies $r_d=0$. This contradicts our choice of $d$, so we have proved that $S$ is an integral domain.
\end{proof}

\begin{remark}
We want to apply the Theorems \ref{StructureGradAlgDedekind} and \ref{StructureGradAlgFree} later to rings of invariants which are clearly integral domains. Nevertheless, Lemma \ref{RegIntegral} is important also in that situation, because we will use induction on $\dim S$ in the proof of these theorems and it is a priori not clear that the integrality hypothesis remains true in the induction step.
\end{remark}

We next prove the following technical lemma which provides the main part of the proof of Theorems \ref{StructureGradAlgDedekind} and \ref{StructureGradAlgFree}.

\begin{lemma} \label{StructureGradAlgLemma}
Let $R$ be a Dedekind domain and let $S=\bigoplus_{i \in \mathbb{N}} S_i$ be a regular graded $R$-algebra such that $S_0 \subsetneq S$. Let $d \in \mathbb{N}_{>0}$ be minimal with $S_d \neq 0$. By the structure theorem for finitely generated torsion-free modules over Dedekind domains (see Jacobson \textup{\cite[Theorem 10.14]{jacba2}}) we can write $S_d=M \oplus I$ where $I$ is isomorphic to some non-zero ideal in $R$ and $M$ is a free $R$-module; set $J \coloneqq (I)_S$. Then the following holds:
\begin{compactenum}[a)]
\item $T \coloneqq S/J$ is again a regular ring.
\item If $S_i$ is a free $R$-module for each $i \in \mathbb{N}_0$, then $T_i$ is also a free $R$-module for each $i$.
\item If $T \cong B_{I_1, \ldots, I_n} R$ for some ideals $I_1, \ldots, I_n$ in $R$, then $S \cong B_{I_1, \ldots, I_n, I} R$.
\end{compactenum}
\end{lemma}

\begin{proof} \
\begin{compactenum}[a)]
\item For the proof of this, it is by Lemma \ref{RegHomMaxId} sufficient to prove that the localization $T_\maxId$ is regular for every homogeneous maximal ideal $\maxId \subset T$. So let $\maxId$ be such an ideal, and let $\maxIdn$ be a homogeneous maximal ideal in $S$ such that $\overline{\maxIdn}=\maxId$, where $\overline{\maxIdn}$ is the image of $\maxIdn$ under the projection map $S \to T$. By Lemma \ref{RegHomMaxId}b) we have $\maxIdn=(\primId, S_+)_S$ for some nonzero prime ideal $\primId \subset R$ and then $T_\maxId \cong S_\maxIdn/J_\maxIdn$. By assumption, $S$ and thus $S_\maxIdn$ is regular; so in order to prove that $S_\maxIdn/J_\maxIdn$ is regular, it is sufficient to prove that there exists a regular system of parameters in $S_\maxIdn$ containing a set of generators of $J_\maxIdn$ (see Bruns and Herzog \cite[Proposition 2.2.4]{bh}). In order to achieve this, we first prove that $J_\maxIdn$ is a principal ideal in $S_\maxIdn$ generated by some $g \in S_d$. For this, we first note that $U_0:=S_0 \backslash \primId$ is a multiplicatively closed subset in $S$ which consists only of homogeneous elements of degree $0$, so the localization $U_0^{-1}S$ is still a graded ring with degree-$0$-part isomorphic to $R_\primId$. The degree-$d$-part of $U_0^{-1} S$ is $I_\primId \oplus M_\primId$, and $I_\primId$ is isomorphic to an ideal in $R_\primId$. But since $R_\primId$ is a discrete valuation ring, we find that this ideal is principal, so $I_\primId$ is generated by one element $g$ and we may choose $g$ in such a way that $g \in I$. Then $g$ also generates $U_0^{-1} J$ as an ideal in $U_0^{-1} S$. But we can also view $S_\maxIdn$ as a localization of $U_0^{-1} S$ and hence $g$ generates $J_\maxIdn$.

So in order to prove that there exists a regular system of parameters in $S_\maxIdn$ containing a set of generators for $J_\maxIdn$, we only have to prove that there exists such a system of parameters containing $g$, or equivalently, that $g \notin (\maxIdn_\maxIdn)^2$. So suppose we would have $g \in (\maxIdn_\maxIdn)^2$. Then there exists an $h \in R \backslash \maxIdn$ such that $g h \in \maxIdn^2$. We write $h=\sum_{i \in \mathbb{N}} h_i$ with $h_i \in S_i$. We have $S_+ \subseteq \maxIdn$ and thus $h_0 \notin \maxIdn$; in particular $h_0 \neq 0$, and because $\maxIdn^2$ is a homogeneous ideal, we have $g h_0 \in \maxIdn^2$. This implies that we can write $gh_0= \sum_j \gamma_j \delta_j$ for certain $\gamma_j, \delta_j \in \maxIdn$. Since $\maxIdn$ is a homogeneous ideal, we may assume that all the $\gamma_j$ and $\delta_j$ are homogeneous. Further $g h_0$ is homogeneous of degree $d$ and by our choice of $d$ we get that in each product $\gamma_j \delta_j$ one factor is of degree $0$ and the other one of degree $d$. So without loss of generality we may assume $\gamma_j \in R \cap \maxIdn= \primId$ and $\delta_j \in S_d \cap \maxIdn =S_d$ for each $j$. We can write each $\delta_j$ as $\delta_j= \lambda_j + \mu_j$ with $\lambda_j \in I$ and $\mu_j \in M$. This implies $g h_0= \sum_j \gamma_j \lambda_j + \sum_j \gamma_j \mu_j$ and using $g \in I$ we obtain $g h_0= \sum_j \gamma_j \lambda_j$. We have chosen $g$ in such a way that it generates $U_0^{-1} I$ as an $U_0^{-1} R$-module. So we have elements $ \eta_j \in U_0^{-1} R$ with $\lambda_j= \eta_j g$, and obtain $g h_0 = \left( \sum_j \gamma_j \eta_j \right)\! g$ and hence $h_0 = \sum_j \gamma_j \eta_j$. Now we can find $\omega \in R \backslash \primId$ such that there exist elements $\theta_j \in R$ with $\eta_j=\frac{\theta_j}{\omega}$ for each $j$. This implies $h_0 \omega = \sum_j \gamma_j \theta_j$, and this is a contradiction because the right hand side is an element of $\primId$ as $\gamma_j \in \primId$ for all $j$, while the left hand side is not. So we proved that $T$ is indeed regular.

\item We already know that $T$ is regular and hence an integral domain by Lemma \ref{RegIntegral}; in particular, all modules $T_i$ are torsion-free and thus projective because $R$ is a Dedekind domain. By the structure theorem for finitely generated torsion-free modules over Dedekind domains (see \cite[Theorem 10.14]{jacba2}), we have $T_i \cong R^l \oplus \tilde{I}$ for some ideal $\tilde{I} \subseteq R$ and some integer $l \geq 0$. Next the projection map $S \to T$ restricts to a surjective homomorphism $S_i \to T_i$ of $R$-modules the kernel of which is $J_i$, which is a free $R$-module: $S_d$ is free, so $I$ is principal and hence $J_i \cong S_{i-d}$. Now since $T_i$ is projective, we get that $S_i \cong T_i \oplus J_i \cong R^l \oplus \tilde{I} \oplus J_i$ and since $S_i$ and $J_i$ are free, we obtain that $\tilde{I}$ is a principal ideal by the structure theorem mentioned above, and thus $T_i$ is indeed free.

\item We have an isomorphism $\alpha: B_{I_1, \ldots, I_n} R \to T$. Then $ \alpha$ maps each $I_j \subseteq B_{I_j} R$ to a submodule of $T_{d_j}$ for some $d_j \in \mathbb{N}$. The canonical projection $\beta: S \to T$ restricts to a surjective homomorphism of $R$-modules $S_{d_j} \to T_{d_j}$. Since $T_{d_j}$ is torsion-free and finitely generated, it is projective, and hence there exists a homomorphism of $R$-modules $\beta'_j: T_{d_j} \to S_{d_j}$ with $\beta \circ \beta'_j = \id$. Define $I'_j :=\beta'(\alpha(I_j))$; then $\beta$ restricts to an isomorphism $I'_j \to \alpha(I_j)$. Since $\alpha$ is an isomorphism, the inverse of the restricted $\beta$ defines a homomorphism of $R$-algebras $\psi_j: B_{I_j} R \to S$ such that $\beta \circ \psi_j$ is the restriction of $\alpha$ to $1 \otimes \cdots \otimes 1 \otimes B_{I_j} R \otimes 1 \otimes \cdots \otimes 1$, and $\psi_1, \ldots, \psi_m$ together give a homomorphism $\varphi_0: B_{I_1} R \otimes_{R} \cdots \otimes_{R} B_{I_m} R \to S$ such that $\beta \circ \varphi_0 = \alpha$. Further we define $I_{n+1}:=I$. By definition, this is isomorphic to an ideal in $R$ and we can extend $\varphi_0$ to a map $\varphi: B_{I_1} R \otimes_{R} \cdots \otimes_{R} B_{I_{n+1}} R \to S$ which maps $1 \otimes \cdots \otimes 1 \otimes I_{n+1} \subseteq B_{I_1} R \otimes_{R} \cdots \otimes_{R} B_{I_{n+1}} R $ to $I \subseteq S_d$.

It remains to prove that $\varphi$ is an isomorphism. We first prove that it is surjective. It is sufficient to prove that the image of $\varphi$ contains every homogeneous element $t \in S$ and we show this by induction on $\deg t$. The case $\deg t =0$ is trivial, so we assume $\deg t>0$. We define $s:=\varphi_0(\alpha^{-1}(\beta(t))) \in \im \varphi_0 \subseteq \im \varphi \subseteq S$. Now $\beta \circ \varphi_0 = \alpha$ implies $\beta(s)=\beta(t)$, and thus $t-s \in \ker \beta =J$. Since $J$ is by definition generated by its degree-$d$-part, we can write $t-s=\sum_l r_l a_l$ with $r_l \in J_d$ and $a_l \in S$. Now by construction we obtain $r_l \in \im \varphi$ for every $l$, and since $\deg a_l = \deg t - d$, $a_l \in \im \varphi$ by induction. Finally the fact that $\im \varphi$ is a subalgebra of $S$ implies that $t=s+\sum_l r_l a_l \in \im \varphi$ and we thus proved that $\varphi$ is surjective. Since $\dim S \geq \dim T +1 = m+ \dim R +1=\dim (B_{I_1} R \otimes_{R} \ldots \otimes_{R} B_{I_{n+1}} R)$ by Proposition \ref{DimGenBlowup} and $B_{I_1} R \otimes_{R} \ldots \otimes_{R} B_{I_{n+1}} R$ is an integral domain by Lemmas \ref{GenBlowupRegular} and \ref{RegIntegral}, $\varphi$ is also injective. This finishes the proof.
\end{compactenum}
\vspace{-1em}
\end{proof}

Now we can prove the two theorems of this section:

\begin{proof}[Proof of Theorem \ref{StructureGradAlgDedekind}]
The implication $(ii) \implies (i)$ follows from Lemma \ref{GenBlowupRegular}. We prove the converse by induction on $\dim S \geq 1$. If $\dim S=1$, we use that $S/S_+ \cong R$ and thus $S_+$ is a prime ideal in $S$ which is not maximal; in fact, it is a minimal prime ideal because $\dim S=1$. Hence Lemma \ref{RegIntegral} implies $S_+=\{ 0 \}$ and thus $S=S_0$. Now we assume $\dim S >1$. Then $S \supsetneq R$; let $d$, $I$, and $T$ be as in Lemma \ref{StructureGradAlgLemma}. Then $T$ is again a graded ring with $T_0 \cong R$ and $\dim T < \dim S$. Since $T$ is again regular by Lemma \ref{StructureGradAlgLemma}a) we can apply induction and obtain $T \cong B_{I_1} R \otimes_{R} \cdots \otimes_{R} B_{I_m} R$ for certain ideals $I_i \subseteq R$. Now the theorem follows from Lemma \ref{StructureGradAlgLemma}c).
\end{proof}

\begin{proof}[Proof of Theorem \ref{StructureGradAlgFree}]
The implication $(ii) \implies (i)$ is clear. The strategy for the other implication is the same as in the proof of Theorem \ref{StructureGradAlgDedekind}. The case $\dim S=1$ works exactly as before and in the case $\dim S>1$ let again $d$, $I$, and $T$ be as in Lemma \ref{StructureGradAlgLemma}. Since $S_d$ is free, $I$ is principal, see \cite[Theorem 10.14]{jacba2}. By Lemma \ref{StructureGradAlgLemma}a) and b) we get that $T$ is regular and each $T_i$ is free. So we can apply induction and obtain that $T \cong R[x_1, \ldots, x_{n-1}]$. Since $I$ is principal and thus $B_I R \cong R[x]$, the theorem now follows from Lemma \ref{StructureGradAlgLemma}c).
\end{proof}

\section{Arithmetic invariants of pseudoreflection \\ groups} \label{SectionInvariants}

In this section we apply Theorems \ref{StructureGradAlgDedekind} and \ref{StructureGradAlgFree} to rings of arithmetic invariants of pseudoreflection groups. Let $R$ be a Dedekind domain which is not a field, $G$ a finite group, and consider an $R$-representation of $G$, that is, a group homomorphism $G \to Gl_n(R)$. We then define a $G$-action on the polynomial ring $R[x_1, \ldots, x_n]$ via $ (\sigma \cdot f)(a):=f(\sigma^{-1} \cdot a)$ for all $\sigma \in G, f \in R[x_1, \ldots, x_n], a \in R^n$. The object we want to study is the ring of invariants $R[x_1, \ldots, x_n]^G$. By a classical result of Noether (see Benson \cite[Theorem 1.3.1]{benson}) $R[x_1, \ldots, x_n]$ is a finitely generated $R[x_1, \ldots, x_n]^G$-module and hence $\dim R[x_1, \ldots, x_n]^G=\dim R[x_1, \ldots, x_n]= n +1$. Therefore, if $R[x_1, \ldots, x_n]^G$ is isomorphic to a polynomial ring $R[y_1, \ldots, y_m]$ or, more generally, to a tensor product of blowup algebras $B_{I_1, \ldots, I_m} R$, then $n=m$ by Proposition \ref{DimGenBlowup}. First we want to figure out the relation between $R[x_1, \ldots, x_n]^G$ and $(U^{-1}R)[x_1, \ldots, x_n]^G$ for a multiplicatively closed subset $U \subset R$. This is the content of the following lemma:

\begin{lemma} \label{InvarLocaliz}
\begin{compactenum}[a)]
Let $R,G,U$ be as above.
\item We have $(U^{-1} R)[x_1, \ldots, x_n]^G=U^{-1}(R[x_1, \ldots, x_n]^G)$.
\item If $R[x_1, \ldots, x_n]^G \cong B_{I_1, \ldots, I_n} R$, then 
\[ (U^{-1} R)[x_1, \ldots, x_n]^G \cong B_{U^{-1}I_1, \ldots, U^{-1} I_n} (U^{-1} R). \]
\item If $R[x_1, \ldots, x_n]^G \cong R[y_1, \ldots, y_n]$, then 
\[U^{-1} R[x_1, \ldots, x_n]^G \cong U^{-1}R[y_1, \ldots, y_n]. \]
\end{compactenum}
\end{lemma}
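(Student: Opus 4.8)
The plan is to prove part a) directly and then derive parts b) and c) as easy consequences. For part a), the key observation is that localization commutes with taking $G$-invariants here because $G$ is finite and the localizing set $U$ lies in the degree-$0$ part $R$, on which $G$ acts trivially. Concretely, the inclusion $U^{-1}(R[x_1,\ldots,x_n]^G) \subseteq (U^{-1}R)[x_1,\ldots,x_n]^G$ is immediate: an invariant polynomial stays invariant after clearing denominators from $R$, since for $u \in U$ and $\sigma \in G$ we have $\sigma \cdot u = u$. For the reverse inclusion, take $f \in (U^{-1}R)[x_1,\ldots,x_n]^G$ and write $f = g/u$ with $g \in R[x_1,\ldots,x_n]$ and $u \in U$; we may further assume $g$ is homogeneous in each degree component separately, so without loss of generality $g$ is a single polynomial with coefficients in $R$. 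The element $g$ itself need not be invariant, but $g$ and $\sigma \cdot g$ have the same image in $U^{-1}R[x_1,\ldots,x_n]$ for every $\sigma$. The standard trick is to replace $g$ by $\tilde g := \left(\prod_{\sigma \in G} v_\sigma\right) g$ where $v_\sigma \in U$ is chosen so that $v_\sigma(\sigma\cdot g - g) = 0$ in $R[x_1,\ldots,x_n]$ (such $v_\sigma$ exists since $\sigma \cdot g - g$ maps to $0$ in the localization, and $R[x_1,\ldots,x_n]$ is a localization-friendly situation as $R$ is a domain, hence $R[x_1,\ldots,x_n]$ is a domain and $\sigma\cdot g - g$ is actually already forced to be $0$ — wait, one must be careful). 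Let me reorganize: since $R$ is a domain, $R[x_1,\ldots,x_n]$ injects into $(U^{-1}R)[x_1,\ldots,x_n]$, so $g$ and $\sigma \cdot g$ having the same image forces $\sigma \cdot g = g$ in $R[x_1,\ldots,x_n]$ already; thus $g$ is invariant and $f = g/u \in U^{-1}(R[x_1,\ldots,x_n]^G)$. This is cleaner than I first feared — the domain hypothesis on $R$ (which holds throughout Section 3, where $R$ is a Dedekind domain) does all the work.

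For part b), assume $R[x_1,\ldots,x_n]^G \cong B_{I_1,\ldots,I_n}R$ as graded rings. Localizing at $U$ and applying part a) gives $(U^{-1}R)[x_1,\ldots,x_n]^G = U^{-1}(R[x_1,\ldots,x_n]^G) \cong U^{-1}(B_{I_1,\ldots,I_n}R)$, so it remains to identify $U^{-1}(B_{I_1,\ldots,I_n}R)$ with $B_{U^{-1}I_1,\ldots,U^{-1}I_n}(U^{-1}R)$. This is a routine computation: localization commutes with tensor products over $R$, reducing to the case $n=1$, and $U^{-1}(B_I R) = U^{-1}\bigl(\bigoplus_{k} I^k\bigr) = \bigoplus_k U^{-1}(I^k) = \bigoplus_k (U^{-1}I)^k = B_{U^{-1}I}(U^{-1}R)$, where the middle equality $U^{-1}(I^k) = (U^{-1}I)^k$ holds because localization is exact and respects products of ideals. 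One should note the isomorphism is compatible with the chosen grading. Part c) is the special case of b) where all $I_j$ are principal: if $R[x_1,\ldots,x_n]^G \cong R[y_1,\ldots,y_n] = B_{(1),\ldots,(1)}R$, then by b) we get $(U^{-1}R)[x_1,\ldots,x_n]^G \cong B_{U^{-1}(1),\ldots,U^{-1}(1)}(U^{-1}R) = (U^{-1}R)[y_1,\ldots,y_n]$.

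I do not expect a genuine obstacle here; the only subtlety is bookkeeping about gradings and making sure the isomorphisms produced are graded isomorphisms in the sense fixed in Section~\ref{SectionBlowup} (i.e.\ for one of the admissible gradings on the blowup algebra). The one point that deserves a sentence of care is the reduction in part a) to the case where $g$ has coefficients in $R$: an element of $(U^{-1}R)[x_1,\ldots,x_n]$ has only finitely many monomials, so a common denominator $u \in U$ can be cleared, and this $u$ does not interfere with invariance since $G$ fixes $U$ pointwise. Everything else is formal.

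\begin{proof}
We first prove a). The inclusion $U^{-1}(R[x_1, \ldots, x_n]^G) \subseteq (U^{-1}R)[x_1, \ldots, x_n]^G$ is immediate: if $f \in R[x_1,\ldots,x_n]^G$ and $u \in U$, then for every $\sigma \in G$ we have $\sigma \cdot u = u$ (as $G$ acts trivially on $R$ and $U \subseteq R$), so $\sigma \cdot (f/u) = (\sigma \cdot f)/u = f/u$. For the reverse inclusion, let $h \in (U^{-1}R)[x_1,\ldots,x_n]^G$. Since $h$ has only finitely many nonzero coefficients, we can find a common denominator $u \in U$ and write $h = g/u$ with $g \in R[x_1,\ldots,x_n]$. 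Because $R$ is an integral domain, the canonical map $R[x_1,\ldots,x_n] \to (U^{-1}R)[x_1,\ldots,x_n]$ is injective. For every $\sigma \in G$, the elements $g$ and $\sigma \cdot g$ of $R[x_1,\ldots,x_n]$ have the same image $u \cdot h = u \cdot (\sigma \cdot h) = \sigma \cdot (uh)$ in $(U^{-1}R)[x_1,\ldots,x_n]$; hence $\sigma \cdot g = g$ in $R[x_1,\ldots,x_n]$. Thus $g \in R[x_1,\ldots,x_n]^G$ and $h = g/u \in U^{-1}(R[x_1,\ldots,x_n]^G)$. This proves a).

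For b), assume $R[x_1,\ldots,x_n]^G \cong B_{I_1,\ldots,I_n}R$ as graded rings. Localizing at $U$ and using a), we obtain $(U^{-1}R)[x_1,\ldots,x_n]^G = U^{-1}(R[x_1,\ldots,x_n]^G) \cong U^{-1}(B_{I_1,\ldots,I_n}R)$, and this is a graded isomorphism for the corresponding grading. It remains to show $U^{-1}(B_{I_1,\ldots,I_n}R) \cong B_{U^{-1}I_1, \ldots, U^{-1}I_n}(U^{-1}R)$. Since localization commutes with tensor products over $R$, it suffices to treat the case $n=1$. There, using that localization is exact and compatible with products of ideals, we get $U^{-1}(B_I R) = U^{-1}\left(\bigoplus_{k \in \mathbb{N}_0} I^k\right) = \bigoplus_{k \in \mathbb{N}_0} U^{-1}(I^k) = \bigoplus_{k \in \mathbb{N}_0} (U^{-1}I)^k = B_{U^{-1}I}(U^{-1}R)$, and this identification respects the grading. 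This proves b).

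Finally, c) is the special case of b) in which all the ideals $I_i$ are equal to $R$: if $R[x_1,\ldots,x_n]^G \cong R[y_1,\ldots,y_n]$, then $R[y_1,\ldots,y_n] \cong B_{R,\ldots,R}R$, so by b) we obtain $(U^{-1}R)[x_1,\ldots,x_n]^G \cong B_{U^{-1}R,\ldots,U^{-1}R}(U^{-1}R) \cong (U^{-1}R)[y_1,\ldots,y_n]$.
\end{proof}
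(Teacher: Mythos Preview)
Your proof is correct and follows essentially the same route as the paper: clear a common denominator in part a) to reduce to an invariant in $R[x_1,\ldots,x_n]$, then for b) use that localization commutes with tensor products and with direct sums to identify $U^{-1}(B_{I_1,\ldots,I_n}R)$ with $B_{U^{-1}I_1,\ldots,U^{-1}I_n}(U^{-1}R)$, and observe c) is a special case. The only cosmetic difference is that in a) the paper verifies invariance of $g=uf$ directly via $\sigma\cdot(uf)=u\cdot(\sigma\cdot f)=uf$, whereas you pass through injectivity of $R[x_1,\ldots,x_n]\hookrightarrow (U^{-1}R)[x_1,\ldots,x_n]$; both are fine.
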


\begin{proof}
\begin{compactenum}[a)]
\item If $f \in (U^{-1} R)[x_1, \ldots, x_n]^G$, then $f$ is a multivariate polynomial with coefficients in $U^{-1}R$, so there exists a $u \in U$ such that $u \cdot f$ has coefficients in $R$, and hence $u \cdot f \in R[x_1, \ldots, x_n]$. But since $f$ is $G$-invariant, we have for all $\sigma \in G$: $\sigma \cdot(u \cdot f)= u \cdot (\sigma \cdot f)=u \cdot f$, so $u \cdot f \in R[x_1, \ldots, x_n]^G$ and thus $f \in U^{-1} R[x_1, \ldots, x_n]^G$. The reverse inclusion is clear.
\item For every ideal $I \subseteq R$ we have 
\[U^{-1} (B_I R)= U^{-1} (\bigoplus_{m \in \mathbb{N}_0} I^m)=\bigoplus_{m \in \mathbb{N}_0} U^{-1} I^m = \bigoplus_{m \in \mathbb{N}_0} (U^{-1} I)^m = B_{U^{-1} I} (U^{-1}R) . \]
Together with the assumption and part a), this implies
\begin{align*}
(U^{-1} R)[x_1, \ldots, x_n]^G &=U^{-1}(R[x_1, \ldots, x_n]^G) \cong U^{-1}(B_{I_1, \ldots, I_n} R) \\ &=U^{-1} (B_{I_1} R \otimes_R \ldots \otimes_R B_{I_n} R) \\ &\cong (U^{-1} B_{I_1} R) \otimes_{U^{-1} R} \ldots \otimes_{U^{-1} R} (U^{-1} B_{I_n} R) \\ &= (B_{U^{-1} I_1} U^{-1} R) \otimes_{U^{-1} R} \ldots \otimes_{U^{-1} R} (B_{U^{-1} I_n} U^{-1} R) \\ &= B_{U^{-1} I_1, \ldots, U^{-1} I_n} (U^{-1} R).
\end{align*}
\item is a special case of b).
\end{compactenum}
\vspace{-1em}
\end{proof}

The lemma implies in particular that if $R[x_1, \ldots, x_n]^G \cong B_{I_1, \ldots, I_n} R$, then for every prime ideal $\primId \subset R$, $R_\primId[x_1, \ldots, x_n]^G$ is a polynomial ring (note that $R_\primId$ is a discrete valuation ring and hence a principal ideal domain). The next theorem shows that the converse is also true:

\begin{theorem} \label{GenBlowupInv}
Let $R$ be a Dedekind domain, $G$ a finite group with a representation $G \to Gl_n(R)$. Assume that for every nonzero prime ideal $\primId$ in $R$, the ring of invariants $R_\primId[x_1, \ldots, x_n]^G$ is isomorphic to a polynomial ring over $R_\primId$. Then there exist nonzero ideals $I_1, \ldots, I_n$ in $R$ such that $R[x_1, \ldots, x_n]^G \cong B_{I_1, \ldots, I_n} R$. If for every $d \in \mathbb{N}$ the $R$-module $R[x_1, \ldots, x_n]^G_d$ is free, then $R[x_1, \ldots, x_n]^G \cong R[y_1, \ldots, y_n]$.
\end{theorem}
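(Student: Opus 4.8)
The plan is to deduce both statements from the structure theorems of Section~\ref{SectionAlgebras} by showing that the ring of invariants $S := R[x_1,\ldots,x_n]^G$ is regular. Note first that $S$ is a graded ring with $S_0 = R$, that it is finitely generated as an $R$-algebra by Noether's theorem, and (since $R$ is not a field by the standing assumption of this section) that $\dim S = \dim R[x_1,\ldots,x_n] = n+1$ as recalled above. Granting regularity of $S$, Theorem~\ref{StructureGradAlgDedekind} applied with $S_0 = R$ produces nonzero ideals $I_1,\ldots,I_m \subseteq R$ with $S \cong B_{I_1,\ldots,I_m} R$, and Proposition~\ref{DimGenBlowup} together with $\dim S = n+1$ forces $m = n$. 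If moreover every $S_d$ is a free $R$-module, then $S$ satisfies condition~(i) of Theorem~\ref{StructureGradAlgFree}, which gives $S \cong R[y_1,\ldots,y_m]$, and again $m = n$ by comparing Krull dimensions.

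It remains to prove that $S$ is regular, and here I would invoke Lemma~\ref{RegHomMaxId}: it suffices to show that $S_\maxId$ is regular for every homogeneous maximal ideal $\maxId \subset S$, and by part~b) of that lemma every such $\maxId$ has the form $(\primId, S_+)_S$ with $\primId$ a maximal ideal of $R$ — equivalently, with $\primId$ a nonzero prime ideal of $R$, since $R$ is a Dedekind domain but not a field. Fix such a $\primId$ and put $U := R \setminus \primId$. Then $S_\maxId$ is a localization of $U^{-1}S$ (because $U \subseteq R \setminus \primId = S_0 \cap (S \setminus \maxId)$), so it is enough to see that $U^{-1}S$ is regular; and by Lemma~\ref{InvarLocaliz}~a),
\[
U^{-1}S = U^{-1}\bigl(R[x_1,\ldots,x_n]^G\bigr) = (U^{-1}R)[x_1,\ldots,x_n]^G = R_\primId[x_1,\ldots,x_n]^G,
\]
which by hypothesis is isomorphic to $R_\primId[x_1,\ldots,x_n]$. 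Since $R_\primId$ is a discrete valuation ring, it is regular, hence so is this polynomial ring, and therefore so is its localization $S_\maxId$. Thus $S$ is regular, and the two applications of the structure theorems above complete the proof.

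I do not anticipate a genuine obstacle here: the substantive work has been front-loaded into Theorems~\ref{StructureGradAlgDedekind} and~\ref{StructureGradAlgFree} and Lemmas~\ref{RegHomMaxId} and~\ref{InvarLocaliz}. What remains is essentially bookkeeping: identifying the homogeneous localizations of $S$ with rings of invariants over the local rings $R_\primId$ (this is exactly Lemma~\ref{InvarLocaliz}~a)), and the dimension count pinning the number of blowup factors — respectively variables — to $n$. The one point worth stating carefully is that the homogeneous maximal ideals of $S$ correspond precisely to the nonzero primes of $R$, so that the hypothesis, phrased for all nonzero primes of $R$, is exactly the input that Lemma~\ref{RegHomMaxId} requires.
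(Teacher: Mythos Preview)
Your proof is correct and follows essentially the same route as the paper: reduce to regularity of $S=R[x_1,\ldots,x_n]^G$, check this at the homogeneous maximal ideals via Lemma~\ref{RegHomMaxId}, identify the relevant localization with $R_\primId[x_1,\ldots,x_n]^G$ using Lemma~\ref{InvarLocaliz}~a), and then invoke Theorems~\ref{StructureGradAlgDedekind} and~\ref{StructureGradAlgFree}. The only cosmetic difference is that you include the dimension count $m=n$ inside the proof, whereas the paper records it in the paragraph preceding the theorem.
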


\begin{proof}
By Theorem \ref{StructureGradAlgDedekind} and Theorem \ref{StructureGradAlgFree}, it is sufficient to prove that the ring $R[x_1, \ldots, x_n]^G$ is regular. Hence, by Lemma \ref{RegHomMaxId}a) we have to prove that the localization $R[x_1, \ldots, x_n]^G_\maxId$ is regular for every homogeneous maximal ideal $\maxId \subset R[x_1, \ldots, x_n]^G$. By Lemma \ref{RegHomMaxId}b) we find that $\maxId=(\primId,R[x_1, \ldots, x_n]^G_+)$ for some nonzero prime ideal $\primId \subset R$. Now we find that $R[x_1, \ldots, x_n]^G_\maxId$ is a localization of $(R \backslash \primId)^{-1} R[x_1, \ldots, x_n]^G$. But $(R \backslash \primId)^{-1} R[x_1, \ldots, x_n]^G$ is by Lemma \ref{InvarLocaliz}a) isomorphic to $R_\primId[x_1, \ldots, x_n]^G$ and hence by assumption a polynomial ring over $R_\primId$ and thus regular. This implies that also $R[x_1, \ldots, x_n]^G_\maxId$ is regular.
\end{proof}

With these results our main question is reduced to the special case that $R$ is a discrete valuation ring. So in the following we assume that $R$ is a discrete valuation ring with maximal ideal $(\pi)$, $K \coloneqq \Quot(R)$, and $F \coloneqq R/(\pi)$. As above, let further $G$ be a finite group with an $R$-representation $G \to Gl_n(R)$. Then this induces representations $G \to Gl_n(K)$ and $G \to Gl_n(F)$. First of all, Lemma \ref{InvarLocaliz} implies that if $R[x_1, \ldots, x_n]^G$ is a polynomial ring, then $K[x_1, \ldots, x_n]^G$ is also a polynomial ring. The converse however is not true (see Example \ref{ExampleInvar}), so our goal is now to prove some conditions under which $R[x_1, \ldots, x_n]^G$ is a polynomial ring provided that $K[x_1, \ldots, x_n]^G$ is a polynomial ring. For this we will make use of the ring of invariants $F[x_1, \ldots, x_n]^G$ over the residue field $F=R/(\pi)$.

We first consider an easier question: suppose we have elements $f_1, \ldots, f_n \in R[x_1, \ldots, x_n]^G$ which are algebraically independent over $R$ with $K[x_1, \ldots, x_n]^G=K[f_1, \ldots, f_n]$. Under which conditions is $R[x_1, \ldots, x_n]^G=R[f_1, \ldots, f_n]$? This is answered by the following lemma.

\begin{lemma} \label{AInvModulo}
With the above notation, we have $R[x_1, \ldots, x_n]^G=R[f_1, \ldots, f_n]$ if and only if the classes of $f_1, \ldots, f_n$ in $F[x_1, \ldots, x_n]^G$ are algebraically independent.
\end{lemma}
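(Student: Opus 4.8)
The plan is to exploit the reduction-mod-$\pi$ homomorphism $R[x_1,\ldots,x_n] \to F[x_1,\ldots,x_n]$, which is $G$-equivariant and hence carries each $f_i$ to its class $\bar f_i \in F[x_1,\ldots,x_n]^G$, together with two further elementary facts: since $R[x_1,\ldots,x_n]$ is a domain and $\pi \neq 0$, any $h$ with $\pi h$ $G$-invariant is itself $G$-invariant; and since $f_1,\ldots,f_n$ are algebraically independent over $R$, the substitution map $R[t_1,\ldots,t_n] \to R[x_1,\ldots,x_n]$, $t_i \mapsto f_i$, is injective (and likewise over $K$). In both implications the inclusion $R[f_1,\ldots,f_n] \subseteq R[x_1,\ldots,x_n]^G$ is automatic, so only one containment has to be checked each time.

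For the forward implication I would assume $R[x_1,\ldots,x_n]^G = R[f_1,\ldots,f_n]$ and argue by contradiction: if $\bar f_1,\ldots,\bar f_n$ satisfied a nonzero relation $P \in F[t_1,\ldots,t_n]$, I would lift $P$ to some $\tilde P \in R[t_1,\ldots,t_n]$ with at least one unit coefficient. Then $\tilde P(f_1,\ldots,f_n)$ is $G$-invariant and reduces to $0$ mod $\pi$, so it equals $\pi h$ for some $h \in R[x_1,\ldots,x_n]$; this $h$ is then $G$-invariant, hence of the form $Q(f_1,\ldots,f_n)$ with $Q \in R[t_1,\ldots,t_n]$ by hypothesis, and injectivity of the substitution map forces $\tilde P = \pi Q$. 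But then every coefficient of $\tilde P$ lies in $(\pi)$, contradicting the choice of $\tilde P$.

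For the converse I would assume $\bar f_1,\ldots,\bar f_n$ are algebraically independent over $F$, take an arbitrary $g \in R[x_1,\ldots,x_n]^G$, and use $K[x_1,\ldots,x_n]^G = K[f_1,\ldots,f_n]$ to write $g = P(f_1,\ldots,f_n)$ with $P \in K[t_1,\ldots,t_n]$; the goal is $P \in R[t_1,\ldots,t_n]$. If some coefficient of $P$ had negative valuation, I would choose $v \geq 1$ so that $\pi^v P$ lies in $R[t_1,\ldots,t_n]$ and has a unit coefficient, observe that $(\pi^v P)(f_1,\ldots,f_n) = \pi^v g \in \pi R[x_1,\ldots,x_n]$, and reduce mod $\pi$ to obtain a nonzero relation among $\bar f_1,\ldots,\bar f_n$ over $F$ — a contradiction. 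Hence $P$ has coefficients in $R$ and $g \in R[f_1,\ldots,f_n]$.

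I do not expect a serious obstacle here; the only points requiring a little care are the $G$-equivariance of reduction mod $\pi$ (so that $\bar f_i$ really is $G$-invariant and substitution commutes with reduction) and the valuation bookkeeping when clearing denominators from $K$ down to $R$ and then passing to $F$.
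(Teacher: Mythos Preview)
Your proposal is correct and follows essentially the same approach as the paper's proof: both directions hinge on the interplay between the unique representation in $K[f_1,\ldots,f_n]$ (via algebraic independence), clearing denominators by a suitable power of $\pi$, and reducing modulo $\pi$. The paper merely organises the two implications as direct contrapositives rather than as proofs by contradiction, but the computations and key ideas are identical.
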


\begin{proof}
First assume that there is an invariant $f \in R[x_1, \ldots, x_n]^G \backslash R[f_1, \ldots, f_n]$. Since $f \in R[x_1, \ldots, x_n]^G \subseteq K[x_1, \ldots, x_n]^G =K[f_1, \ldots, f_n]$, there is a polynomial $p \in K[y_1, \ldots, y_n]$ such that $f=p(f_1, \ldots, f_n)$. Then there is an $l \in \mathbb{N}$ such that $\pi^l \cdot f \in R[f_1, \ldots, f_n]$. We may assume that $l$ is minimal with this property, and since $f \notin R[f_1, \ldots, f_n]$, we have $l>0$. There is a polynomial $q \in R[y_1, \ldots, y_n]$ such that $\pi^l f=q(f_1, \ldots, f_n)$. By the algebraic independence of $f_1, \ldots, f_n$, $q=\pi^l p$. By the minimality of $l$, not all coefficients of $q$ are divisible by $\pi$, so the image $\overline{q}$ of $q$ in $F[y_1, \ldots, y_n]$ is not zero. On the other hand, since $l>0$, the class of $\pi^l f$ in $F[x_1, \ldots, x_n]$ is zero, so $\pi^l f=q(f_1, \ldots, f_n)$ implies a nontrivial relation between the classes of $f_1, \ldots, f_n$ over $F$, so these classes are algebraically dependent.

Conversely, if the classes of $f_1, \ldots, f_n$ are algebraically dependent, there exists a polynomial $0 \neq \overline{q} \in F[y_1, \ldots, y_n]$ such that $\overline{q}(\overline{f_1}, \ldots, \overline{f_n})=0$. We choose a polynomial $q \in R[y_1, \ldots, y_n]$ which is mapped to $\overline{q}$ under the canonical projection $R[y_1, \ldots, y_n] \to F[y_1, \ldots, y_n]$. Then $q(f_1, \ldots, f_n)$ must be divisible by $\pi$, so there is an $f \in R[x_1, \ldots, x_n]$ with $\pi f =q(f_1, \ldots, f_n)$. Since $f_1, \ldots, f_n$ are invariants, $f$ is also an invariant. Furthermore, $f_1, \ldots, f_n$ are algebraically independent over $K$, so there is only one polynomial $p \in K[y_1, \ldots, y_n]$ such that $f=p(f_1, \ldots, f_n)$, and by the above we have $p=\frac{1}{\pi} q$. But since $\overline{q} \neq 0$, we have $p \notin R[y_1, \ldots, y_n]$ and, by the uniqueness of $p$, this implies $f \notin R[f_1, \ldots, f_n]$. This finishes the proof.
\end{proof}

The next proposition gives a sufficient condition for $R[x_1, \ldots, x_n]^G$ to be a polynomial ring.

\begin{prop} \label{PolyRingSameDeg}
Let $R$ be a discrete valuation ring with maximal ideal $(\pi)$, $F:=R/(\pi)$ and $K:=\Quot(R)$. Let $G$ be a finite group with a representation $G \to Gl_n(R)$. If the rings of invariants $K[x_1, \ldots, x_n]^G$ and $F[x_1, \ldots, x_n]^G$ are polynomial rings whose generators have the same degrees, then $R[x_1, \ldots, x_n]^G$ is also a polynomial ring.
\end{prop}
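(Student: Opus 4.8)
The plan is to exhibit homogeneous invariants $f_1,\dots,f_n\in R[x_1,\dots,x_n]^G$ that are algebraically independent over $R$, satisfy $K[x_1,\dots,x_n]^G=K[f_1,\dots,f_n]$, and whose reductions modulo $(\pi)$ are algebraically independent over $F$; then Lemma~\ref{AInvModulo} immediately gives $R[x_1,\dots,x_n]^G=R[f_1,\dots,f_n]$, a polynomial ring. Fix homogeneous generators $h_1,\dots,h_n$ of $K[x_1,\dots,x_n]^G$ and $g_1,\dots,g_n$ of $F[x_1,\dots,x_n]^G$; by hypothesis we may index them so that $\deg h_i=\deg g_i=:d_i$. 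Then both invariant rings have Hilbert series $\prod_{i=1}^n(1-t^{d_i})^{-1}$, so $\dim_K K[x_1,\dots,x_n]^G_d=\dim_F F[x_1,\dots,x_n]^G_d$ for every $d$.

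Next I would relate these dimensions to the $R$-ranks of the graded pieces of $R[x_1,\dots,x_n]^G$. Since $R[x_1,\dots,x_n]$ is a domain and $\pi\neq 0$, if $\pi w$ is $G$-invariant for some homogeneous $w$, then $\pi(\sigma w-w)=0$ for all $\sigma\in G$, hence $w$ is invariant; therefore the quotient $R[x_1,\dots,x_n]_d/R[x_1,\dots,x_n]^G_d$ is torsion-free, hence free (it is finitely generated over the DVR $R$), so the inclusion $R[x_1,\dots,x_n]^G_d\hookrightarrow R[x_1,\dots,x_n]_d$ splits and $R[x_1,\dots,x_n]^G_d$ is free of some rank $r_d$. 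Applying $-\otimes_R K$ and Lemma~\ref{InvarLocaliz}~a) (with $U=R\setminus\{0\}$) identifies $K\otimes_R R[x_1,\dots,x_n]^G_d$ with $K[x_1,\dots,x_n]^G_d$, so $r_d=\dim_K K[x_1,\dots,x_n]^G_d$; applying $-\otimes_R F$ to the split inclusion gives an injection $F\otimes_R R[x_1,\dots,x_n]^G_d\hookrightarrow F[x_1,\dots,x_n]_d$ landing in $F[x_1,\dots,x_n]^G_d$, whose source has $F$-dimension $r_d$. Combining with the Hilbert series equality of the previous paragraph, $r_d=\dim_F F[x_1,\dots,x_n]^G_d$, so this injection is an isomorphism; equivalently, reduction modulo $(\pi)$ carries $R[x_1,\dots,x_n]^G$ onto $F[x_1,\dots,x_n]^G$ in every degree.

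Now I would lift: choose homogeneous $f_i\in R[x_1,\dots,x_n]^G_{d_i}$ with $\bar f_i=g_i$. The $g_i$ are algebraically independent over $F$, and then the usual argument (clear denominators, cancel the maximal power of $\pi$, reduce mod $(\pi)$) shows that $f_1,\dots,f_n$ are algebraically independent over $K$. Hence $K[f_1,\dots,f_n]\subseteq K[x_1,\dots,x_n]^G$ is an inclusion of graded rings, and $K[f_1,\dots,f_n]$, being a polynomial ring on generators of degrees $d_1,\dots,d_n$, also has Hilbert series $\prod_{i=1}^n(1-t^{d_i})^{-1}$; comparing degree by degree with the Hilbert series of $K[x_1,\dots,x_n]^G$ forces $K[f_1,\dots,f_n]=K[x_1,\dots,x_n]^G$. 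Thus $f_1,\dots,f_n$ meet the hypotheses under which Lemma~\ref{AInvModulo} applies, and since their classes in $F[x_1,\dots,x_n]^G$ are algebraically independent, the lemma yields $R[x_1,\dots,x_n]^G=R[f_1,\dots,f_n]$.

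The crux is the middle step, namely that reduction modulo $(\pi)$ is \emph{surjective} onto $F[x_1,\dots,x_n]^G$, i.e.\ that $\rank_R R[x_1,\dots,x_n]^G_d=\dim_F F[x_1,\dots,x_n]^G_d$. The split-injection argument only ever gives the inequality $\dim_K K[x_1,\dots,x_n]^G_d\leq\dim_F F[x_1,\dots,x_n]^G_d$, which can be strict (e.g.\ when $|G|$ is not invertible in $R$); it is precisely the hypothesis that both invariant rings are polynomial rings with generators of the same degrees, hence have identical Hilbert series, that promotes this to an equality and makes the construction of the $f_i$ succeed. Everything else is routine algebra plus the already-established Lemmas~\ref{InvarLocaliz} and~\ref{AInvModulo}.
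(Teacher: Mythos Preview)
Your proof is correct and follows essentially the same plan as the paper's: establish that $\dim_K K[x_1,\dots,x_n]^G_d=\dim_F F[x_1,\dots,x_n]^G_d$, deduce that reduction modulo $(\pi)$ maps $R[x_1,\dots,x_n]^G$ onto $F[x_1,\dots,x_n]^G$, lift the generators $g_i$ to homogeneous invariants $f_i\in R[x_1,\dots,x_n]^G$, verify they generate $K[x_1,\dots,x_n]^G$, and finish with Lemma~\ref{AInvModulo}. The only noteworthy difference is in the penultimate step: the paper invokes Lemma~\ref{LemmaKemper} (the product-of-degrees criterion) to conclude $K[x_1,\dots,x_n]^G=K[f_1,\dots,f_n]$, whereas you compare Hilbert series directly; your variant is slightly more self-contained since it sidesteps any faithfulness hypothesis on the $K$-representation.
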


The proof requires the following statement due to Kemper \cite[Proposition 16]{kemperrefl}, which we will need again later.

\begin{lemma} \label{LemmaKemper}
Let $G$ be a finite group and let $G \to Gl_n(K)$ be a representation of $G$ over a field $K$. If the representation is faithful, then for elements $f_1, \ldots, f_n \in K[x_1, \ldots, x_n]^G$ the following two properties are equivalent:
\begin{compactenum}[(i)]
\item The polynomials $f_1, \ldots, f_n$ are algebraically independent and we have $\deg f_1 \cdots \deg f_n=|G|$.
\item $K[x_1, \ldots, x_n]^G=K[f_1, \ldots, f_n]$.
\end{compactenum}
\end{lemma}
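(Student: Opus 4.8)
The plan is to reduce the equivalence to a degree count in fraction fields, combined with the normality of polynomial rings. Write $C := K[x_1, \ldots, x_n]$ and $B := C^G = K[x_1, \ldots, x_n]^G$; as the statement tacitly requires, we take $f_1, \ldots, f_n$ (and the auxiliary elements $h_i$ below) to be homogeneous of positive degree. Two elementary facts are used throughout: since the linear $G$-action on $\Quot(C) = K(x_1, \ldots, x_n)$ is faithful, Artin's theorem gives that $\Quot(C)$ is Galois over its fixed field $\Quot(C)^G$ with $[\Quot(C):\Quot(C)^G] = |G|$; and $\Quot(C)^G = \Quot(B)$ (given an invariant fraction $p/q$, multiplying numerator and denominator by $\prod_{\sigma \neq 1}\sigma(q)$ makes the denominator, hence also the numerator, invariant). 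Moreover $\dim B = n$, since $C$ is a finite $B$-module by Noether's theorem. The two nontrivial inputs are: (a) if $h_1, \ldots, h_n \in C$ form a homogeneous system of parameters, then $C$ is a free graded module over the polynomial subring $K[h_1, \ldots, h_n]$, and comparing Hilbert series with $(1-t)^{-n}$ shows that its rank -- equivalently the field degree $[\Quot(C):K(h_1,\ldots,h_n)]$ -- equals $\deg h_1 \cdots \deg h_n$; and (b) for arbitrary algebraically independent homogeneous $h_1, \ldots, h_n \in C$ one has the B\'ezout-type bound $[\Quot(C):K(h_1, \ldots, h_n)] \le \deg h_1 \cdots \deg h_n$, with equality precisely when the $h_i$ form a homogeneous system of parameters (equivalently, when $C$ is finite over $K[h_1, \ldots, h_n]$); both are standard, see, e.g., Benson \cite{benson} and Bruns--Herzog \cite{bh}. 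Granting this, $(ii) \implies (i)$ is immediate: if $B = K[f_1, \ldots, f_n]$, then these $n$ elements are algebraically independent because $\dim B = n$, and since $C$ is finite over $B = K[f_1, \ldots, f_n]$ they form a homogeneous system of parameters for $C$; hence by (a) and the two facts above, $\deg f_1 \cdots \deg f_n = [\Quot(C):K(f_1,\ldots,f_n)] = [\Quot(C):\Quot(C)^G] = |G|$.

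For $(i) \implies (ii)$ assume $f_1, \ldots, f_n$ are algebraically independent with $\deg f_1 \cdots \deg f_n = |G|$. Since $\mathrm{trdeg}_K K(f_1, \ldots, f_n) = n = \mathrm{trdeg}_K \Quot(C)$ and $\Quot(C)$ is finitely generated over $K$, the extension $\Quot(C)/K(f_1,\ldots,f_n)$ is finite, and invariance of the $f_i$ gives a tower $K(f_1, \ldots, f_n) \subseteq \Quot(C)^G \subseteq \Quot(C)$. Combining (b) with the Galois degree,
\begin{align*}
\deg f_1 \cdots \deg f_n &\ge [\Quot(C):K(f_1, \ldots, f_n)] \\
&= [\Quot(C):\Quot(C)^G] \cdot [\Quot(C)^G:K(f_1, \ldots, f_n)] \\
&= |G| \cdot [\Quot(C)^G:K(f_1, \ldots, f_n)] \;\ge\; |G|,
\end{align*}
so the hypothesis forces equality throughout. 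In particular $\Quot(C)^G = K(f_1, \ldots, f_n)$, i.e. $\Quot(B) = \Quot(K[f_1,\ldots,f_n])$, and $[\Quot(C):K(f_1,\ldots,f_n)] = \deg f_1 \cdots \deg f_n$, so by (b) the $f_i$ form a homogeneous system of parameters for $C$. Hence $C$, and therefore its $K[f_1,\ldots,f_n]$-submodule $B$, is a finite module over the Noetherian ring $K[f_1,\ldots,f_n]$; in particular every element of $B$ is integral over $K[f_1, \ldots, f_n]$ and lies in its fraction field, and since $K[f_1, \ldots, f_n]$ is a polynomial ring and hence integrally closed, $B \subseteq K[f_1, \ldots, f_n]$. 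The reverse inclusion is trivial, so $K[x_1, \ldots, x_n]^G = K[f_1, \ldots, f_n]$.

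The conceptual weight lies entirely in the black boxes (a) and (b) -- above all the assertion that for algebraically independent homogeneous elements the field degree $[\Quot(C):K(h_1,\ldots,h_n)]$ is bounded by $\deg h_1 \cdots \deg h_n$ with equality exactly for homogeneous systems of parameters. This is precisely where the hypothesis $\deg f_1 \cdots \deg f_n = |G|$ is used, since without it one cannot expect $n$ algebraically independent homogeneous invariants to be a system of parameters. Finally, note that no relation between $\charakt K$ and $|G|$ is assumed: Molien's formula and the Hilbert series of $B$ itself never appear -- only the Hilbert series of the polynomial ring $C$ -- so the argument applies in the modular case as well.
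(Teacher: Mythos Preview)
Your argument is correct. Note that the paper does not actually give a proof of this lemma: it simply writes ``For the proof of this, we refer to \cite{kemperrefl}'', so there is no in-paper argument to compare against. What you have written is essentially the standard proof (and close to Kemper's own): reduce both directions to a field-degree count via the tower $K(f_1,\ldots,f_n)\subseteq\Quot(C)^G\subseteq\Quot(C)$, using Artin's theorem for $[\Quot(C):\Quot(C)^G]=|G|$ and normality of the polynomial ring $K[f_1,\ldots,f_n]$ to pass from ``same fraction field and integral'' to equality of rings.

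The only place a referee would press you is the black box you label~(b): the B\'ezout-type inequality $[\Quot(C):K(h_1,\ldots,h_n)]\le\prod\deg h_i$ for algebraically independent homogeneous $h_i$, with equality precisely in the hsop case. Your proof of $(i)\Rightarrow(ii)$ genuinely needs both halves of this --- the inequality to force $\Quot(B)=K(f_1,\ldots,f_n)$, and the equality characterization to conclude that $f_1,\ldots,f_n$ is an hsop so that $C$ (hence $B$) is module-finite over $K[f_1,\ldots,f_n]$. This is indeed classical (it is the affine B\'ezout bound for a dominant morphism $\mathbb{A}^n\to\mathbb{A}^n$ given by homogeneous forms, with the defect from $\prod d_i$ accounted for by the locus $V(h_1,\ldots,h_n)$ at infinity), but it is the one step where a precise reference is worth pinning down rather than a generic pointer to Benson or Bruns--Herzog. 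Fact~(a) is unproblematic: it is the standard Hilbert-series computation for a Cohen--Macaulay graded algebra over an hsop.

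Your closing remark that the argument is characteristic-free is apt and worth keeping, since the lemma is later applied over residue fields $R/\primId$ where $|G|$ may vanish.
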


For the proof of Lemma \ref{LemmaKemper}, we refer to \cite{kemperrefl}.

\begin{proof}[Proof of Proposition \ref{PolyRingSameDeg}]
We first note that by the assumptions for every degree $d$ we have
\[ \dim_K K[x_1, \ldots, x_n]^G_d=\dim_F F[x_1, \ldots, x_n]^G_d. \]
Since $R$ is a principal ideal domain, $R[x_1, \ldots, x_n]^G_d$ is a free $R$-module; let $B=\{b_1, \ldots, b_k \}$ be a basis. Then $B$ is also a $K$-basis of $K[x_1, \ldots, x_n]^G_d$ and hence $k=|B|=\dim_K K[x_1, \ldots, x_n]^G_d$. Let $\lambda_1 , \ldots, \lambda_k \in R$ be such that the image of $\lambda_1 b_1 + \ldots + \lambda_k b_k$  under the canonical projection map $\varphi: R[x_1, \ldots, x_n]^G \to F[x_1, \ldots, x_n]^G$ is zero. Then $\frac{1}{\pi}(\lambda_1 b_1 + \ldots + \lambda_k b_k) \in R[x_1, \ldots, x_n]^G_d=\langle b_1, \ldots, b_k \rangle_R$, so by linear independence all $\lambda_i$ are divisible by $\pi$. Hence the image of $B$ under $\varphi$ is $F$-linearly independent and thus by the dimension formula from above an $F$-basis of $F[x_1, \ldots, x_n]^G_d$. In particular $F[x_1, \ldots, x_n]^G_d$ is contained in the image of $\varphi$. Since this is true for all $d$, we obtain that $\varphi$ is surjective. Now we write $F[x_1, \ldots, x_n]^G=F[f_1, \ldots, f_n]$. By the above there exist homogeneous polynomials $g_1, \ldots, g_n \in R[x_1, \ldots, x_n]^G$ such that for every $i$ we have $\deg g_i = \deg f_i$ and $\varphi(g_i)=f_i$. Then $g_1, \ldots, g_m$ are algebraically independent over $R$ and their degrees are by assumption the same as the degrees of generators for $K[x_1, \ldots, x_n]^G$. This implies together with Lemma \ref{LemmaKemper} that they are generators for $K[x_1, \ldots, x_n]^G$ themselves. Further their images under $\varphi$ are algebraically independent, and thus they are indeed generators for $R[x_1, \ldots, x_n]^G$ by Lemma \ref{AInvModulo}.
\end{proof}

An important special case in which the assumptions of Proposition \ref{PolyRingSameDeg} are satisfied is the following. We call a finite subgroup of $Gl_n(R)$ a pseudoreflection group if it is a pseudoreflection group in $Gl_n(K)$, i.e. if it is generated by pseudoreflections in $Gl_n(K)$,

\begin{cor} \label{InvDVRInvert}
Let $R$ be a discrete valuation ring with maximal ideal $(\pi)$, $F:=R/(\pi)$, and $K:=\Quot(R)$. Let $G \subseteq Gl_n(R)$ be a finite pseudoreflection group such that $|G|$ is invertible in $R$. Then $R[x_1, \ldots, x_n]^G$ is a polynomial ring.
\end{cor}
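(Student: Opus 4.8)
The plan is to verify the three hypotheses of Proposition \ref{PolyRingSameDeg} --- that $K[x_1,\ldots,x_n]^G$ and $F[x_1,\ldots,x_n]^G$ are polynomial rings and that their generators have the same degrees --- after which the corollary is immediate. Since $G \subseteq Gl_n(R) \subseteq Gl_n(K)$, the representation over $K$ is faithful, $G$ acts on $K^n$ as a pseudoreflection group, and $|G|$ is invertible in $K$; so by the classical theorem of Chevalley--Shephard--Todd, $K[x_1,\ldots,x_n]^G \cong K[f_1,\ldots,f_n]$ is a polynomial ring in $n$ variables.

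For the residue field I would first check that the reduction homomorphism $G \to Gl_n(F)$ is injective. If some $\sigma \in G$ reduced to the identity, write $\sigma = \id + \pi^k A$ with $k \geq 1$ maximal and $A$ a matrix over $R$ not all of whose entries lie in $(\pi)$; expanding $\sigma^{|G|} = \id$ gives $|G|\,\pi^k A \equiv 0 \pmod{\pi^{2k}}$, hence $A \equiv 0 \pmod{\pi}$ because $|G|$ is a unit, a contradiction. Let $\bar G$ denote the image; then $|\bar G| = |G|$, which is a unit in $R$ and therefore nonzero, i.e. invertible, in the field $F$. Moreover, each generating pseudoreflection $\sigma$ of $G$ satisfies $\rank(\id - \sigma) = 1$ over $K$, so all $2 \times 2$ minors of $\id - \sigma$ vanish in $R$ and hence in $F$, giving $\rank(\id - \bar\sigma) \leq 1$; since $\bar\sigma \neq \id$ by injectivity, $\rank(\id - \bar\sigma) = 1$, so $\bar\sigma$ is again a pseudoreflection. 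Thus $\bar G$ is a pseudoreflection group of order invertible in $F$, and by Chevalley--Shephard--Todd $F[x_1,\ldots,x_n]^G = F[x_1,\ldots,x_n]^{\bar G} \cong F[h_1,\ldots,h_n]$ is a polynomial ring in $n$ variables.

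It remains to match the degrees, which I would do via Hilbert series. Because $|G|$ is invertible in $R$, the Reynolds operator $\reynolds := \frac{1}{|G|}\sum_{\sigma \in G}\sigma$ is defined over $R$; restricted to each graded piece $R[x_1,\ldots,x_n]_d$ it is an idempotent $R$-linear endomorphism with image $R[x_1,\ldots,x_n]^G_d$, so $R[x_1,\ldots,x_n]^G_d$ is a direct $R$-summand of the free module $R[x_1,\ldots,x_n]_d$ and is itself free of some rank $r_d$. Tensoring the idempotent with $K$ shows $\dim_K K[x_1,\ldots,x_n]^G_d = r_d$, and tensoring with $F$ shows that $R[x_1,\ldots,x_n]^G_d \otimes_R F$ is the image of the reduction $\bar\reynolds$ of $\reynolds$ on $F[x_1,\ldots,x_n]_d$. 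Since $G \to \bar G$ is bijective, $\bar\reynolds = \frac{1}{|\bar G|}\sum_{\bar\sigma \in \bar G}\bar\sigma$ is precisely the Reynolds operator of $\bar G$, whose image is $F[x_1,\ldots,x_n]^{\bar G}_d$; hence $\dim_F F[x_1,\ldots,x_n]^G_d = r_d$ as well. So the Hilbert series of $K[x_1,\ldots,x_n]^G$ and of $F[x_1,\ldots,x_n]^G$ coincide, and since the Hilbert series of a graded polynomial ring $k[y_1,\ldots,y_n]$ with $\deg y_i = d_i$ is $\prod_{i=1}^n (1-t^{d_i})^{-1}$, which determines the multiset $\{d_1,\ldots,d_n\}$ (Möbius inversion over the divisibility poset, using unique factorization into cyclotomic polynomials), the generators of the two invariant rings have the same degrees. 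Proposition \ref{PolyRingSameDeg} then gives that $R[x_1,\ldots,x_n]^G$ is a polynomial ring.

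The main obstacle is the content of the third paragraph, and specifically the assertion that reduction modulo $\pi$ carries the Reynolds operator of $G$ to that of $\bar G$: this is what forces the ranks on both sides to agree, and it relies on the injectivity of $G \to Gl_n(F)$, which is exactly the step where invertibility of $|G|$ in $R$ (rather than merely in $K$) is used. Everything else reduces to the classical Chevalley--Shephard--Todd theorem over the two residue data and to the already-proven Proposition \ref{PolyRingSameDeg}.
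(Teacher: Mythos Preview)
Your proof is correct and reaches the same endpoint (Proposition~\ref{PolyRingSameDeg}) as the paper, but the route to the equality of degrees is genuinely different.

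The paper does not compare Hilbert series. Instead it takes homogeneous generators $f_1,\ldots,f_n$ of $F[x_1,\ldots,x_n]^G$, lifts them to invariants $g_i:=\reynolds(g_i^{(0)})\in R[x_1,\ldots,x_n]^G$ via the Reynolds operator, observes that $\prod\deg g_i=\prod\deg f_i\leq |G|$, and then invokes an auxiliary Lemma~\ref{DegProduct} (a sharpening of Kemper's criterion) to conclude that the $g_i$ already generate $K[x_1,\ldots,x_n]^G$. This gives the degree match constructively, with explicit common generators over $R$. Your argument instead shows that the degree-$d$ invariants over $R$ form a free summand whose rank is simultaneously $\dim_K K[x_1,\ldots,x_n]^G_d$ and $\dim_F F[x_1,\ldots,x_n]^G_d$, so the two Hilbert series agree, and then recovers the multiset of degrees from the factorisation $\prod(1-t^{d_i})^{-1}$. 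This bypasses Lemma~\ref{DegProduct} entirely, at the cost of the cyclotomic/M\"obius step.

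One remark on efficiency: the injectivity of $G\to Gl_n(F)$, which you single out as the crux, is in fact not needed for either of the two uses you make of it. If a pseudoreflection $\sigma$ reduces to the identity that is harmless, since $\bar G$ is still generated by the remaining images, each of which has $\rank(\id-\bar\sigma)\leq 1$; and the reduced Reynolds operator satisfies $\bar\reynolds=\frac{1}{|G|}\sum_{\sigma\in G}\bar\sigma=\frac{1}{|\bar G|}\sum_{\tau\in\bar G}\tau$ regardless of whether $G\to\bar G$ is injective, because each fibre has the same size $|G|/|\bar G|$. So your Hilbert-series argument actually goes through without that paragraph. The truly indispensable use of invertibility of $|G|$ in $R$ is the existence of $\reynolds$ over $R$ itself. (Injectivity is nonetheless true, and the paper uses it elsewhere; see Proposition~\ref{PolyRingSameDegEquiv}.)
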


For the proof we need the following stronger version of Lemma \ref{LemmaKemper}, which I did not find in the literature.

\begin{lemma} \label{DegProduct}
Let $K$ be a field, $G$ a finite group, and $G \to Gl_n(K)$ a faithful $K$-representation of $G$ such that the ring of invariants $K[x_1, \ldots, x_n]^G$ is a polynomial ring. Assume that there exist algebraically independent homogeneous polynomials $f_1, \ldots, f_n \in K[x_1, \ldots, x_n]^G$ such that $\deg f_1 \cdots \deg f_n \leq |G|$, then $K[x_1, \ldots, x_n]^G=K[f_1, \ldots, f_n]$.
\end{lemma}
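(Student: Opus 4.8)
The plan is to reduce the statement to Lemma \ref{LemmaKemper} by showing that the hypothesis $\deg f_1 \cdots \deg f_n \leq |G|$ is in fact forced to be an equality, and the only genuinely useful input will be a comparison of Hilbert series.

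First I would fix a presentation $K[x_1,\ldots,x_n]^G = K[h_1,\ldots,h_n]$ with the $h_i$ homogeneous (possible by assumption) and put $d_i := \deg h_i$. Applying Lemma \ref{LemmaKemper} to $h_1,\ldots,h_n$ — using that the representation is faithful — gives $d_1 \cdots d_n = |G|$. So it suffices to prove $\deg f_1 \cdots \deg f_n \geq d_1 \cdots d_n$, since together with the hypothesis this forces $\deg f_1 \cdots \deg f_n = |G|$, and a second application of Lemma \ref{LemmaKemper}, now to $f_1,\ldots,f_n$, yields $K[x_1,\ldots,x_n]^G = K[f_1,\ldots,f_n]$.

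For the inequality $\deg f_1 \cdots \deg f_n \geq d_1 \cdots d_n$ I would compare Hilbert series. Since $f_1,\ldots,f_n$ are algebraically independent and homogeneous, of degrees $e_i := \deg f_i \geq 1$, the subring $A := K[f_1,\ldots,f_n]$ of $B := K[x_1,\ldots,x_n]^G$ is graded with $\sum_d \dim_K(A_d)\,t^d = \prod_{i=1}^n (1-t^{e_i})^{-1}$, while $\sum_d \dim_K(B_d)\,t^d = \prod_{i=1}^n (1-t^{d_i})^{-1}$; both series converge for $|t|<1$. Because $A$ is a graded subring of $B$, we have $\dim_K A_d \leq \dim_K B_d$ for every $d$, hence $\prod_{i=1}^n (1-t^{e_i})^{-1} \leq \prod_{i=1}^n (1-t^{d_i})^{-1}$ for all $t \in (0,1)$. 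Multiplying both sides by $(1-t)^n$ and letting $t \to 1^-$, and using $\lim_{t\to 1^-}(1-t)(1-t^e)^{-1} = e^{-1}$, the two sides become $\prod_{i=1}^n e_i^{-1}$ and $\prod_{i=1}^n d_i^{-1}$; thus $\prod_{i=1}^n e_i \geq \prod_{i=1}^n d_i = |G|$, as needed.

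The main pitfall to avoid is the tempting-but-wrong argument that algebraically independent homogeneous elements $f_1,\ldots,f_n$ of the $n$-dimensional ring $B$ automatically form a homogeneous system of parameters, so that $B$ becomes a finite $A$-module and one can read off the rank from a Hilbert-series quotient: this is false in general (for instance $x^2, xy \in K[x_1,x_2]$ are algebraically independent but $K[x_1,x_2]$ is not finite over $K[x^2,xy]$), so no module-finiteness is available a priori, and the degree bound in the hypothesis is exactly what rules such configurations out. The Hilbert-series comparison above is designed to sidestep this entirely, since it uses only the trivial inclusion-of-graded-pieces inequality $\dim_K A_d \leq \dim_K B_d$ together with an elementary limit; the one point requiring a little care is the passage to the limit $t \to 1^-$, which is justified because all the power series involved have non-negative coefficients and converge on $(0,1)$.
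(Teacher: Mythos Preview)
Your proof is correct. Both your argument and the paper's reduce the lemma to Lemma~\ref{LemmaKemper} in exactly the same way: fix homogeneous generators $h_1,\ldots,h_n$ of $K[x_1,\ldots,x_n]^G$ with $\prod_i \deg h_i = |G|$, and show that $\prod_i \deg f_i \geq |G|$, so that the hypothesis becomes an equality and Lemma~\ref{LemmaKemper} finishes.

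The difference lies in how the inequality $\prod_i e_i \geq \prod_i d_i$ is obtained. You compare Hilbert series: from the coefficientwise inequality $\prod_i(1-t^{e_i})^{-1} \leq \prod_i(1-t^{d_i})^{-1}$ on $(0,1)$ you multiply by $(1-t)^n$ and pass to the limit $t\to 1^-$. The paper instead argues combinatorially: after sorting both sequences increasingly, if $\prod_i e_i < \prod_i d_i$ then $e_i < d_i$ for some $i$; but then every homogeneous invariant of degree at most $e_i$ lies in $K[h_1,\ldots,h_{i-1}]$, so the subalgebra generated by invariants of degree $\leq e_i$ has transcendence degree at most $i-1$, contradicting the algebraic independence of $f_1,\ldots,f_i$. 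Your Hilbert-series approach is slick, uses only the trivial inclusion $A_d\subseteq B_d$, and generalizes immediately to any graded inclusion into a polynomial-type algebra; the paper's argument is entirely algebraic, avoids any limiting process, and in fact proves the slightly sharper statement that $e_i \geq d_i$ for \emph{every} $i$ once both sequences are sorted. Either route is perfectly adequate here.
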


\begin{proof}
From Lemma \ref{LemmaKemper} we know that $K[x_1, \ldots, x_n]^G=K[f_1, \ldots, f_n]$ if and only if $\deg f_1 \cdots \deg f_n=|G|$, so we have to show that $\deg f_1 \cdots \deg f_n < |G|$ cannot be the case. We assumed that $K[x_1, \ldots, x_n]^G$ is a polynomial ring, so there exist homogeneous invariants $g_1, \ldots, g_n$ such that $K[x_1, \ldots, x_n]^G=K[g_1, \ldots, g_n]$. We may change the order of the $f_i$ and $g_i$ in such a way that for $i<j$ we have $\deg f_i \leq \deg f_j$ and $\deg g_i \leq \deg g_j$. Again by Lemma \ref{LemmaKemper} we have $\deg g_1 \cdots \deg g_n = |G|$, and hence if $\deg f_1 \cdots \deg f_n < |G|$ we have $d:=\deg f_i < \deg g_i$ for some index $i$. Then every element of $K[x_1, \ldots, x_n]^G_{\leq d}$ is contained in $K[g_1, \ldots, g_{i-1}]$ and thus the transcendence degree of the $K$-algebra $S$ generated by $K[x_1, \ldots, x_n]^G_{\leq d}$ is at most $i-1$. But this is a contradiction since $f_1, \ldots, f_i$ are algebraically independent elements of $S$.
\end{proof}

\begin{proof}[Proof of Corollary \ref{InvDVRInvert}]
Since $|G|$ is invertible in $R$, it is also invertible in $K$ and in $F$ and thus $K[x_1, \ldots, x_n]^G$ and $F[x_1, \ldots, x_n]^G$ are polynomial rings by the Chevalley-Shephard-Todd theorem. Let $f_1, \ldots, f_n$ be homogeneous generators of $F[x_1, \ldots, x_n]^G$. Then $\deg f_1 \cdots \deg f_n \leq |G|$ by Lemma \ref{LemmaKemper}. Let $g_i^{(0)}$ be any polynomial in $R[x_1, \ldots, x_n]$ which is mapped to $f_i$ by the projection map $\varphi : R[x_1, \ldots, x_n] \to F[x_1, \ldots, x_n]$; we define
\[ g_i:= \mathcal{R}(g_i^{(0)}) := \frac{1}{|G|} \sum_{\sigma \in G} \sigma \cdot g_i^{(0)} . \]
Here $\mathcal{R}$ is the Reynolds operator $R[x_1, \ldots, x_n] \to R[x_1, \ldots, x_n]^G$. Then $g_i$ is an invariant which is still mapped to $f_i$ under $\varphi$ because $\varphi(\sigma \cdot g_i^{(0)})=f_i$ for every $\sigma \in G$ since $f_i$ is an invariant. In particular, $g_i$ is homogeneous with $\deg g_i=\deg f_i$ and $g_1, \ldots, g_n$ are algebraically independent; Lemma \ref{DegProduct} implies that $K[x_1, \ldots, x_n]^G=K[g_1, \ldots, g_n]$.  Thus the assumptions of Proposition \ref{PolyRingSameDeg} are satisfied and we obtain that $R[x_1, \ldots, x_n]^G$ is a polynomial ring.
\end{proof}

By combining Corollary \ref{InvDVRInvert} with Proposition \ref{GenBlowupInv}, we immediately obtain the following theorem, which is the main result of this paper.

\begin{theorem} \label{InvPseudoGenBlowup}
Let $R$ be a Dedekind domain, and let $G \subset Gl_n(R)$ be a finite pseudoreflection group such that $|G|$ is invertible in $R$. Then there exist nonzero ideals $I_1, \ldots, I_n \subseteq R$ such that $R[x_1, \ldots, x_n]^G \cong B_{I_1, \ldots, I_n} R$.

 Furthermore, if $R[x_1, \ldots, x_n]^G_d$ is free for every $d$, then $R[x_1, \ldots, x_n]^G$ is isomorphic to a polynomial ring over $R$.
\end{theorem}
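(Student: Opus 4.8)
The plan is to deduce the theorem directly by combining Corollary \ref{InvDVRInvert} with Theorem \ref{GenBlowupInv}; no new argument is needed beyond checking that the hypotheses transfer to localizations. More precisely, Theorem \ref{GenBlowupInv} reduces everything to showing that $R_\primId[x_1,\ldots,x_n]^G \cong R_\primId[x_1,\ldots,x_n]$ for every nonzero prime ideal $\primId \subset R$, and Corollary \ref{InvDVRInvert} supplies exactly this once we know that $R_\primId$ is a discrete valuation ring carrying a pseudoreflection action of $G$ with $|G|$ invertible.

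First I would dispose of a trivial case: if $R$ is a field, then every nonzero ideal of $R$ equals $R$, so $B_{I_1,\ldots,I_n}R \cong B_{R,\ldots,R}R \cong R[y_1,\ldots,y_n]$, and the assertion is the classical theorem of Chevalley--Shephard--Todd. So assume $R$ is not a field and fix a nonzero prime ideal $\primId \subset R$. Since $R$ is a Dedekind domain, $R_\primId$ is a discrete valuation ring, and the inclusion $R \hookrightarrow R_\primId$ induces an injection $Gl_n(R) \hookrightarrow Gl_n(R_\primId)$. Hence $G$, viewed inside $Gl_n(R_\primId)$, is again a finite group acting faithfully; moreover each pseudoreflection of $G$ (an element $g$ with $g-\id$ of rank one) stays a pseudoreflection after the base change $R \hookrightarrow R_\primId$, so $G \subset Gl_n(R_\primId)$ is still a pseudoreflection group. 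Finally, $|G|$ is a unit in $R$ and therefore a unit in $R_\primId$.

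With these verifications in place, Corollary \ref{InvDVRInvert} (applied with $R$ replaced by the discrete valuation ring $R_\primId$, whose residue field is $R_\primId/\primId R_\primId$ and whose fraction field is $\Quot(R)$) shows that $R_\primId[x_1,\ldots,x_n]^G$ is a polynomial ring over $R_\primId$. As this holds for every nonzero prime $\primId$, Theorem \ref{GenBlowupInv} applies and produces nonzero ideals $I_1,\ldots,I_n \subseteq R$ with $R[x_1,\ldots,x_n]^G \cong B_{I_1,\ldots,I_n}R$; the last sentence of Theorem \ref{GenBlowupInv} then gives $R[x_1,\ldots,x_n]^G \cong R[y_1,\ldots,y_n]$ as soon as every graded piece $R[x_1,\ldots,x_n]^G_d$ is free over $R$.

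I do not expect any genuine obstacle here: all the substantial work has already been done, namely the local, discrete-valuation-ring case in Corollary \ref{InvDVRInvert} (which rests on Lemma \ref{DegProduct} and Proposition \ref{PolyRingSameDeg}, via a Reynolds-operator lift of a system of homogeneous generators of the invariants over the residue field) and the passage from all localizations back to $R$ in Theorem \ref{GenBlowupInv} (via the regularity criterion of Lemma \ref{RegHomMaxId} and the structure theorems of Section \ref{SectionAlgebras}). The only mildly delicate point is the bookkeeping in the previous paragraph showing that the pseudoreflection hypothesis and the invertibility of $|G|$ survive localization, and this is entirely routine.
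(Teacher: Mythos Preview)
Your proposal is correct and follows exactly the route the paper takes: the paper's own proof consists of the single sentence ``by combining this corollary [Corollary~\ref{InvDVRInvert}] with Proposition~\ref{GenBlowupInv}, we immediately obtain the following theorem''. Your additional checks (the field case, and that the pseudoreflection hypothesis and invertibility of $|G|$ persist under $R \to R_\primId$) are routine verifications that the paper leaves implicit.
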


We write down an important special case of this theorem explicitly:

\begin{cor}
Let $R$ be a principal ideal domain, and let $G \subseteq Gl_n(R)$ be a finite pseudoreflection group such that $|G|$ is invertible $R$. Then $R[x_1, \ldots, x_n]^G$ is isomorphic to a polynomial ring over $R$.
\end{cor}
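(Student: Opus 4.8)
The plan is to apply Theorem \ref{InvPseudoGenBlowup} directly, since a principal ideal domain is in particular a Dedekind domain, and then to upgrade the conclusion ``tensor product of blowup algebras'' to ``polynomial ring''. So the first step is simply to invoke Theorem \ref{InvPseudoGenBlowup}, which gives nonzero ideals $I_1, \ldots, I_n \subseteq R$ with $R[x_1, \ldots, x_n]^G \cong B_{I_1, \ldots, I_n} R$.

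From here there are two equally short routes to finish, and I would take whichever reads more cleanly. The first: since $R$ is a principal ideal domain, each $I_i$ is principal, hence $B_{I_i} R \cong R[x]$ as graded $R$-algebras (the observation already made at the start of Section \ref{SectionBlowup}), and therefore $B_{I_1, \ldots, I_n} R = B_{I_1} R \otimes_R \cdots \otimes_R B_{I_n} R \cong R[y_1, \ldots, y_n]$. The second route avoids mentioning the $I_i$ at all: the ring $R[x_1, \ldots, x_n]^G$ is a subring of the integral domain $R[x_1, \ldots, x_n]$, hence itself a domain, so each graded piece $R[x_1, \ldots, x_n]^G_d$ is a finitely generated torsion-free module over the principal ideal domain $R$ (finitely generated because it is an $R$-submodule of the finitely generated free module $R[x_1, \ldots, x_n]_d$, $R$ being Noetherian); over a principal ideal domain such a module is free. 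Thus the hypothesis of the ``furthermore'' part of Theorem \ref{InvPseudoGenBlowup} is met, and one concludes $R[x_1, \ldots, x_n]^G \cong R[y_1, \ldots, y_n]$.

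I expect essentially no obstacle here: this is a routine specialization of the main theorem. The only point deserving a line of justification is the freeness of the graded components $R[x_1, \ldots, x_n]^G_d$ (equivalently, that the blowup ideals $I_i$ are principal), and both follow immediately from standard facts about modules over principal ideal domains. I would probably present the second route, since it reuses the already-stated ``furthermore'' clause of Theorem \ref{InvPseudoGenBlowup} and makes the corollary a genuine one-line consequence.
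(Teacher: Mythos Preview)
Your proposal is correct and matches the paper's intent: the paper gives no proof at all for this corollary, simply presenting it as ``an important special case'' of Theorem \ref{InvPseudoGenBlowup}, and both of your routes (principal ideals give $B_{I_i}R\cong R[x]$, or finitely generated torsion-free over a PID implies free so the ``furthermore'' clause applies) correctly fill in the one-line justification.
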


If we add an additional assumption on the representation, a sort of converse of Proposition \ref{PolyRingSameDeg} also holds.

\begin{prop} \label{PolyRingSameDegEquiv}
Let $R$ be a discrete valuation ring with maximal ideal $(\pi)$, $F:=R/(\pi)$, and $K:=\Quot(R)$. Let $G$ be a finite group together with a faithful $R$-representation $G \to Gl_n(R)$. Assume that $K[x_1, \ldots, x_n]^G$ is isomorphic to a polynomial ring, say $K[x_1, \ldots, x_n]^G=K[f_1, \ldots, f_n]$. Then the following two statements are equivalent:
\begin{compactenum}[(i)]
\item $F[x_1, \ldots, x_n]^G$ is isomorphic to a polynomial ring, say $F[x_1, \ldots, x_n]^G=F[g_1, \ldots, g_n]$, and for every $i \in \{ 1, \ldots, n\}$ we have $\deg f_i = \deg g_i$.
\item $R[x_1, \ldots, x_n]^G$ is a polynomial ring, and the group homomorphism $G \to Gl_n(F)$ given by the representation $G \to Gl_n(R)$ and the projection $R \to F$ is injective.
\end{compactenum}
\end{prop}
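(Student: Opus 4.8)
The plan is to prove the two implications separately, each time relying on Lemma~\ref{LemmaKemper}, which for a faithful representation identifies the product of the degrees of a polynomial system of generators of the invariant ring with the group order. For $(i)\Rightarrow(ii)$: the hypotheses of $(i)$ are exactly those of Proposition~\ref{PolyRingSameDeg}, so that proposition already yields that $R[x_1,\ldots,x_n]^G$ is a polynomial ring, and only the injectivity of $G\to Gl_n(F)$ has to be shown. Let $G'$ be the image of $G$ in $Gl_n(F)$; the $G$-action on $F[x_1,\ldots,x_n]$ factors through the faithful action of $G'$, so $F[x_1,\ldots,x_n]^G=F[x_1,\ldots,x_n]^{G'}=F[g_1,\ldots,g_n]$. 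Applying Lemma~\ref{LemmaKemper} to the faithful representation $G'\to Gl_n(F)$ gives $\deg g_1\cdots\deg g_n=|G'|$, while applying it to the faithful representation $G\to Gl_n(K)$ (faithful because $R\hookrightarrow K$) gives $\deg f_1\cdots\deg f_n=|G|$; as $\deg f_i=\deg g_i$ for all $i$, this forces $|G|=|G'|$, i.e.\ $G\to Gl_n(F)$ is injective.

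For $(ii)\Rightarrow(i)$ I would first write $R[x_1,\ldots,x_n]^G=R[h_1,\ldots,h_n]$ with the $h_i$ homogeneous. By Lemma~\ref{InvarLocaliz}~a), localizing at $R\backslash\{0\}$ gives $K[x_1,\ldots,x_n]^G=K[h_1,\ldots,h_n]$, so by Lemma~\ref{LemmaKemper} the $h_i$ are algebraically independent with $\deg h_1\cdots\deg h_n=|G|$; since $\{f_1,\ldots,f_n\}$ and $\{h_1,\ldots,h_n\}$ are two minimal homogeneous generating systems of the same graded ring $K[x_1,\ldots,x_n]^G$ they have the same multiset of degrees, so after reindexing $\deg h_i=\deg f_i$ for all $i$. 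One then checks that $h_i\notin\pi R[x_1,\ldots,x_n]$ (otherwise $\pi^{-1}h_i$ would be an invariant in $R[x_1,\ldots,x_n]$ that, by algebraic independence of the $h_i$, does not lie in $R[h_1,\ldots,h_n]$), so the reductions $\bar h_i$ are nonzero homogeneous invariants in $F[x_1,\ldots,x_n]^G$ with $\deg\bar h_i=\deg f_i$, and the reduction homomorphism carries $R[h_1,\ldots,h_n]$ onto $F[\bar h_1,\ldots,\bar h_n]$. It now suffices to prove $F[x_1,\ldots,x_n]^G=F[\bar h_1,\ldots,\bar h_n]$, the inclusion $F[\bar h_1,\ldots,\bar h_n]\subseteq F[x_1,\ldots,x_n]^G$ being clear.

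For the reverse inclusion I would argue as follows. Since $R[x_1,\ldots,x_n]$ is a finite $R[x_1,\ldots,x_n]^G$-module by Noether's theorem, $F[x_1,\ldots,x_n]$ is a finite $F[\bar h_1,\ldots,\bar h_n]$-module; hence $\bar h_1,\ldots,\bar h_n$ form a homogeneous system of parameters, and as $F[x_1,\ldots,x_n]$ is Cohen--Macaulay it is free over the polynomial subring $F[\bar h_1,\ldots,\bar h_n]$, necessarily of rank $\deg\bar h_1\cdots\deg\bar h_n=|G|$ by comparison of Hilbert series; in particular $[\Quot(F[x_1,\ldots,x_n]):\Quot(F[\bar h_1,\ldots,\bar h_n])]=|G|$. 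On the other hand, by $(ii)$ the homomorphism $G\to Gl_n(F)$ is injective, so $G$ acts faithfully on $F[x_1,\ldots,x_n]$ and $\Quot(F[x_1,\ldots,x_n])/\Quot(F[x_1,\ldots,x_n]^G)$ is Galois of degree $|G|$ (using that the fixed field equals $\Quot(F[x_1,\ldots,x_n]^G)$). Therefore $F[\bar h_1,\ldots,\bar h_n]$ and $F[x_1,\ldots,x_n]^G$ share the same fraction field, and since $F[\bar h_1,\ldots,\bar h_n]$ is integrally closed while $F[x_1,\ldots,x_n]^G$ is a finite, hence integral, extension of it contained in that common fraction field, the two rings coincide; this exhibits $F[x_1,\ldots,x_n]^G$ as a polynomial ring whose generators have degrees $\deg f_1,\ldots,\deg f_n$, which is $(i)$.

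I expect the implication $(ii)\Rightarrow(i)$ to be the main obstacle, and within it the identification $F[x_1,\ldots,x_n]^G=F[\bar h_1,\ldots,\bar h_n]$: because $|G|$ need not be invertible in $R$ there is no Reynolds operator over $R$ with which to lift and average an arbitrary invariant over $F$, so one is forced to argue indirectly, through the degree count $\deg\bar h_1\cdots\deg\bar h_n=|G|$ and the normality of the polynomial subring $F[\bar h_1,\ldots,\bar h_n]$. It is precisely here that the injectivity hypothesis on $G\to Gl_n(F)$ is indispensable, since it is what guarantees that the Galois extension $\Quot(F[x_1,\ldots,x_n])/\Quot(F[x_1,\ldots,x_n]^G)$ again has degree exactly $|G|$ rather than a proper divisor.
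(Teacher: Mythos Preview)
Your argument is correct. For $(i)\Rightarrow(ii)$ you do exactly what the paper does: invoke Proposition~\ref{PolyRingSameDeg} for the first claim, then compare the two degree products via Lemma~\ref{LemmaKemper}.

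For $(ii)\Rightarrow(i)$ your route is genuinely different and considerably longer than the paper's. The paper writes $R[x_1,\ldots,x_n]^G=R[h_1,\ldots,h_n]$, observes $\prod_i \deg h_i=|G|$, then applies Lemma~\ref{AInvModulo} (the ``if'' direction) to conclude immediately that $\bar h_1,\ldots,\bar h_n$ are algebraically independent over $F$; since the $F$-representation is faithful, Lemma~\ref{LemmaKemper} then gives $F[x_1,\ldots,x_n]^G=F[\bar h_1,\ldots,\bar h_n]$ in one line. You instead obtain algebraic independence of the $\bar h_i$ indirectly, via finiteness of $F[x_1,\ldots,x_n]$ over $F[\bar h_1,\ldots,\bar h_n]$ and a dimension count, and then you essentially reprove the relevant implication of Lemma~\ref{LemmaKemper} from scratch (Cohen--Macaulay freeness, Hilbert series rank computation, Galois degree, normality). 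All of this is valid, but it duplicates machinery the paper has already packaged: Lemma~\ref{AInvModulo} replaces your $h_i\notin\pi R[x_1,\ldots,x_n]$ check and the hsop argument, and one further call to Lemma~\ref{LemmaKemper} replaces the entire last paragraph. The upside of your approach is that it is self-contained and shows transparently where the faithfulness of $G\to Gl_n(F)$ enters; the upside of the paper's is economy.
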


\begin{proof}
Let $\varphi$ denote the homomorphism $G \to Gl_n(F)$ mentionend in $(ii)$. We first prove the implication $(i) \implies (ii)$. The fact that $R[x_1, \ldots, x_n]^G$ is a polynomial ring is just Proposition \ref{PolyRingSameDeg}, and the injectivity of $\varphi$ follows since by Lemma \ref{LemmaKemper} we have $|G|=\deg f_1 \cdots \deg f_n$ and $|\im(\varphi)|=\deg g_1 \cdots \deg g_n$; the equality of these two products follows from $(i)$.

The proof of the implication $(ii) \implies (i)$ goes as follows. By assumption $R[x_1, \ldots, x_n]^G$ is isomorphic to a polynomial ring, say $R[x_1, \ldots, x_n]^G=R[h_1, \ldots, h_n]$. Then we have $\deg(h_1) \cdots \deg(h_n)=|G|$ and by Lemma \ref{AInvModulo}, the classes of $h_1, \ldots, h_n$ in $F[x_1, \ldots , x_n]$ are algebraically independent. By our assumption on $\varphi$, the representation $G \to Gl_n(F)$ is faithful, so we can now apply Lemma \ref{LemmaKemper} and thus get $F[x_1, \ldots, x_n]^G=F[\overline{h_1}, \ldots, \overline{h_n}]$. This proves the claim since after reordering the $h_i$ we have $\deg(f_i)=\deg(h_i)$ for all $i$.
\end{proof}

This together with Proposition \ref{GenBlowupInv} immediately implies the following result for rings of invariants over arbitrary Dedekind domains:

\begin{theorem} \label{AInvInjGrp}
Let $R$ be a Dedekind domain with $K:=\Quot(R)$, and let $G \subseteq Gl_n(R)$ be a finite pseudoreflection group such that $K[x_1, \ldots, x_n]^G$ is a polynomial ring $K[f_1, \ldots, f_n]$. Assume that for every nonzero prime ideal $\primId \subset R$ with $|G| \in \primId$ the homomorphism $G \to Gl_n(R/\primId)$ given by the inclusion $G \to Gl_n(R)$ and the projection $R \to R/\primId$ is injective. Then $R[x_1, \ldots, x_n]^G$ is isomorphic to $B_{I_1, \ldots, I_n} R$ for some nonzero ideals $I_1, \ldots, I_n \subseteq R$ if and only if for every nonzero prime ideal $\primId \subset R$ with $|G| \in \primId$ the ring of invariants $(R/\primId)[x_1, \ldots, x_n]^G$ is a polynomial ring $(R/\primId)[g_1, \ldots, g_n]$ with $\deg g_i=\deg f_i$ for each $i \in \{1 , \ldots, n \}$.
\end{theorem}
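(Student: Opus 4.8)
The plan is to deduce the theorem from Proposition~\ref{GenBlowupInv} together with Propositions~\ref{PolyRingSameDeg} and~\ref{PolyRingSameDegEquiv}, by reducing both implications to statements about the discrete valuation rings $R_\primId$. The facts that will be used repeatedly are: for a nonzero prime ideal $\primId \subset R$ (which is automatically maximal, as $R$ is a Dedekind domain) the localization $R_\primId$ is a discrete valuation ring with quotient field $K$ and residue field $R/\primId$; the composite $G \hookrightarrow Gl_n(R) \hookrightarrow Gl_n(R_\primId)$ is again faithful and sends a pseudoreflection to a pseudoreflection, so $G$ is still a finite pseudoreflection group over $R_\primId$; and, by Lemma~\ref{InvarLocaliz} a), $R_\primId[x_1,\ldots,x_n]^G = (R \setminus \primId)^{-1}\bigl(R[x_1,\ldots,x_n]^G\bigr)$, so the invariant ring of the localized representation is just the localization of $R[x_1,\ldots,x_n]^G$.

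For the implication that the stated degree condition implies $R[x_1,\ldots,x_n]^G \cong B_{I_1,\ldots,I_n}R$, I would fix an arbitrary nonzero prime $\primId \subset R$ and check that $R_\primId[x_1,\ldots,x_n]^G$ is a polynomial ring; Proposition~\ref{GenBlowupInv} then yields the conclusion. If $|G| \notin \primId$, then $|G|$ is invertible in $R_\primId$ and the claim is exactly Corollary~\ref{InvDVRInvert}. If $|G| \in \primId$, then by hypothesis $(R/\primId)[x_1,\ldots,x_n]^G$ is a polynomial ring whose generators have the same degrees as $f_1,\ldots,f_n$; since $R/\primId$ is the residue field and $K$ the quotient field of $R_\primId$, this says precisely that the invariant rings over both fields are polynomial rings with generators of the same degrees, so Proposition~\ref{PolyRingSameDeg} shows that $R_\primId[x_1,\ldots,x_n]^G$ is a polynomial ring. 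As the two cases cover all nonzero primes, we are done.

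For the converse, assume $R[x_1,\ldots,x_n]^G \cong B_{I_1,\ldots,I_n}R$ and fix a nonzero prime $\primId$ with $|G| \in \primId$. By Lemma~\ref{LocalizGenBlowup} (or Lemma~\ref{InvarLocaliz} b)), $R_\primId[x_1,\ldots,x_n]^G$ is isomorphic to a polynomial ring over $R_\primId$. I would then apply Proposition~\ref{PolyRingSameDegEquiv} to the discrete valuation ring $R_\primId$: its hypotheses hold because the representation $G \to Gl_n(R_\primId)$ is faithful, $K[x_1,\ldots,x_n]^G = K[f_1,\ldots,f_n]$ is a polynomial ring by assumption, statement (ii) of that proposition is satisfied (we have just seen that $R_\primId[x_1,\ldots,x_n]^G$ is a polynomial ring, and $G \to Gl_n(R/\primId)$ is injective by hypothesis of the present theorem), and therefore statement (i) gives that $(R/\primId)[x_1,\ldots,x_n]^G$ is a polynomial ring whose generators, after a suitable reordering, have the same degrees as $f_1,\ldots,f_n$. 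Since $\primId$ was arbitrary among the primes containing $|G|$, this is the desired degree condition.

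There is no deep obstacle here; the theorem is essentially a formal consequence of the results already established. The one point that requires care is the reduction to $R_\primId$: one must verify that $R/\primId$ genuinely is the residue field of $R_\primId$, that faithfulness and the pseudoreflection property are preserved under localization, and, above all, that the case distinction $|G| \in \primId$ versus $|G| \notin \primId$ exhausts all nonzero primes, the hypotheses of the theorem only constraining the former while Corollary~\ref{InvDVRInvert} silently takes care of the latter.
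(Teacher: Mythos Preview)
Your proof is correct and follows essentially the same approach as the paper, which simply states that the theorem is immediate from Proposition~\ref{PolyRingSameDegEquiv} together with Theorem~\ref{GenBlowupInv}. You have made explicit the case distinction $|G|\in\primId$ versus $|G|\notin\primId$ (handling the latter via Corollary~\ref{InvDVRInvert}) and the passage to the discrete valuation rings $R_\primId$, which is exactly what the paper's one-line proof is tacitly relying on.
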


Again we write down the result for the special case that $R$ is a principal ideal domain explicitly.

\begin{cor}
Let $R$ be a principal ideal domain with $K:=Quot(R)$, and let $G \subseteq Gl_n(R)$ be a finite pseudoreflection group such that $K[x_1, \ldots, x_n]^G$ is a polynomial ring $K[f_1, \ldots, f_n]$. Assume that for every prime element $p \in R$ which divides $|G|$ the homomorphism $G \to Gl_n(R/(p))$ given by the inclusion $G \to Gl_n(R)$ and the projection $R \to R/(p)$ is injective. Then $R[x_1, \ldots, x_n]^G$ is isomorphic to a polynomial ring if and only if for every prime element $p \in R$ which divides $|G|$ the ring of invariants $(R/(p))[x_1, \ldots, x_n]^G$ is a polynomial ring $(R/(p))[g_1, \ldots, g_n]$ with $\deg g_i = \deg f_i$ for each $i \in \{ 1, \ldots, n\}$.
\end{cor}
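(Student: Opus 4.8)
The plan is to deduce this corollary directly from Theorem \ref{AInvInjGrp} by specializing to the case of a principal ideal domain. First I would observe that a principal ideal domain is in particular a Dedekind domain, so Theorem \ref{AInvInjGrp} applies to $R$. Next I would translate the hypotheses: in a principal ideal domain every nonzero prime ideal $\primId$ has the form $(p)$ for a prime element $p$, unique up to associates, and the condition $|G| \in \primId$ is equivalent to $p$ dividing $|G|$. Hence the assumption made here that the homomorphism $G \to Gl_n(R/(p))$ is injective for every prime $p$ dividing $|G|$ is precisely the injectivity hypothesis in Theorem \ref{AInvInjGrp}, and the condition there on $(R/\primId)[x_1,\ldots,x_n]^G$ becomes exactly the condition stated here on $(R/(p))[x_1,\ldots,x_n]^G$, degrees included.

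With these identifications, Theorem \ref{AInvInjGrp} asserts that $R[x_1,\ldots,x_n]^G \cong B_{I_1,\ldots,I_n} R$ for some nonzero ideals $I_1,\ldots,I_n \subseteq R$ if and only if the residue-field condition holds. It therefore remains only to observe that, over a principal ideal domain, the class of algebras $B_{I_1,\ldots,I_n} R$ coincides with the class of polynomial rings $R[x_1,\ldots,x_n]$: every nonzero ideal $I_i \subseteq R$ is principal, so $B_{I_i} R \cong R[x]$ by the remark preceding Proposition \ref{DimGenBlowup}, and therefore $B_{I_1,\ldots,I_n} R \cong R[x_1,\ldots,x_n]$; conversely $R[x_1,\ldots,x_n] = B_{R,\ldots,R} R$ is itself a tensor product of blowup algebras. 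Substituting this equivalence into the statement of Theorem \ref{AInvInjGrp} yields exactly the claimed equivalence.

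Since the argument is a pure specialization, there is no genuine obstacle; the only two points requiring a line of justification are the correspondence between nonzero prime ideals and prime elements (up to associates) in a principal ideal domain and the fact that a tensor product of blowup algebras of principal ideals is a polynomial ring, both of which are immediate from the material already established.
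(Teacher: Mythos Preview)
Your proposal is correct and follows exactly the approach the paper intends: the corollary is stated in the paper as the explicit specialization of Theorem \ref{AInvInjGrp} to the case of a principal ideal domain, with no separate proof given. Your identification of nonzero prime ideals with $(p)$ for prime elements $p$, and your observation that over a principal ideal domain every $B_{I_1,\ldots,I_n} R$ is a polynomial ring (using the remark before Proposition \ref{DimGenBlowup}), are precisely the two points needed to pass from the theorem to the corollary.
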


The following example shows that in the situation of Proposition \ref{PolyRingSameDegEquiv}, it is possible that $F[x_1, \ldots, x_n]^G$ is a polynomial ring, but the degrees of the generators are not the same as for $K[x_1, \ldots, x_n]^G$ and therefore $R[x_1, \ldots, x_n]^G$ is not a polynomial ring.

\begin{example} \label{ExampleInvar}
We consider the discrete valuation ring $R=\mathbb{Z}_{(3)}=\{ \frac{a}{b} | a,b \in \mathbb{Z}, 3 \nmid b \}$ and the group $G=S_3$. It can easily be checked that we can define an $R$-representation of $G$ by mapping the transpositions $(1 , 2)$ and $(2 , 3)$ to $\begin{pmatrix} 1 & 3 \\ 0 & -1 \end{pmatrix}$ and $\begin{pmatrix} -2 & -3 \\ 1 & 2 \end{pmatrix}$, respectively. Then we have $K=\Quot(R)=\mathbb{Q}$ and $F=R/(3) \cong \mathbb{F}_3$. Since the two matrices given above define reflections, the ring of invariants $K[x,y]^G$ is a polynomial ring. Explicitly, $K[x,y]^G=K[f,g]$ with $f \coloneqq x^2+3xy+3y^2$ and $g \coloneqq 2x^3+9x^2y+9xy^2$. We can also compute the ring of invariants $F[x,y]^G$ explicitly. It is also a polynomial ring, which is generated by the two polynomials $f':=x$ and $g':=x^4y^2+x^2y^4+y^6$. Since we have $\deg(f') \cdot \deg(g')=|G|$, this implies with Lemma \ref{LemmaKemper} that the group homomorphism $G \to Gl_2(F)$ is injective. This could of course also have been checked directly. Now Proposition \ref{PolyRingSameDegEquiv} implies that $R[x,y]^G$ is not a polynomial ring. Indeed, suppose that $R[x,y]^G$ is a polynomial ring. Then the two generators of this $R$-algebra also generate $K[x,y]^G$ and thus are of degrees $2$ and $3$. This implies they are scalar multiples of $f$ and $g$. Since the coefficients of $f$ do not have a common divisor in $R$, and the same is true for the coefficients of $g$, we find that $R[x,y]^G=R[f,g]$. But this is not true since the invariant $g^2-4f^3$ is divisible by $27$, and because $f$ and $g$ are algebraically independent $h:=\frac{1}{27}(g^2-4f^3) \notin R[f,g]$. As we know from the proof of Proposition \ref{PolyRingSameDeg}, these results are also related to the projection map $\varphi: R[x,y]^G \to F[x,y]^G$. Indeed, we have $\varphi(f)=f'^2$, $\varphi(g)=f'^3$, and $\varphi(h)=g'$. So the image of $R[f,g]$ under $\varphi$ has transcendence degree one over $F$, and also does not contain $f'$, but only $f'^2$ and $f'^3$.
\end{example}

We did not find any example of a finite pseudoreflection group $G$ over a Dedekind domain $R$ with $|G| \in R^\times$ such that $R[x_1, \ldots, x_n]^G$ is not isomorphic to a polynomial ring, but only to a tensor product of blowup algebras, so we make the following conjecture.

\begin{conjecture} \label{ConjPseudoFree}
Let $R$ be a Dedekind domain and let $G \subseteq Gl_n(R)$ be a finite pseudoreflection group. Then $R[x_1, \ldots, x_n]^G$ is isomorphic to a polynomial ring over $R$.
\end{conjecture}

By Theorem \ref{StructureGradAlgFree} this is equivalent to the conjecture that $R[x_1, \ldots, x_n]^G_d$ is a free module for every $d \in \mathbb{N}$. We also state this conjecture more generally, not limited to pseudoreflection groups.

\begin{conjecture}
Let $R$ be a Dedekind domain and let $G$ be a finite group with an $R$-representation $G \to Gl_n(R)$. Then for every $d \in \mathbb{N}$, the $R$-module $R[x_1, \ldots, x_n]_d^G$ is free.
\end{conjecture}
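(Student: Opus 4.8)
The natural plan is to reduce the freeness of $M_d:=R[x_1,\ldots,x_n]_d^G$ to the triviality of a single class in the ideal class group $\mathrm{Cl}(R)$. As a finitely generated submodule of the free module $R[x_1,\ldots,x_n]_d$, $M_d$ is torsion-free, hence projective over the Dedekind domain $R$; it is moreover saturated, since $af\in M_d$ with $0\ne a\in R$ forces $a(\sigma f-f)=0$ and hence $\sigma f=f$ for every $\sigma\in G$. Therefore the inclusion $M_d\hookrightarrow R[x_1,\ldots,x_n]_d$ splits, giving $R[x_1,\ldots,x_n]_d\cong M_d\oplus\bigl(R[x_1,\ldots,x_n]_d/M_d\bigr)$, and by the structure theorem for projective modules over Dedekind domains $M_d$ is free if and only if $\det M_d$ is a free module --- equivalently, by the splitting, if and only if $R[x_1,\ldots,x_n]_d/M_d$ is free. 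So everything comes down to controlling this Steinitz class.

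When $|G|$ is invertible the Reynolds operator $\reynolds$ gives a canonical complement, identifying the quotient above with $(\id-\reynolds)(R[x_1,\ldots,x_n]_d)$, and one would try to split $R[x_1,\ldots,x_n]_d$ as an $R[G]$-module into pieces whose classes are visible, exploiting for instance that $R[G]$ is separable over $R$, so that $R$-projective $R[G]$-modules are $R[G]$-projective. For a pseudoreflection group one could instead combine Lemma \ref{LemmaKemper} with the known degrees of the generators of $\Quot(R)[x_1,\ldots,x_n]^G$ and of the invariant rings over the residue fields in a Hilbert-series bookkeeping, forcing $M_d$ to have the rank of the degree-$d$ part of a polynomial ring and hoping the class is pinned down together with the rank.

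I expect exactly this last step --- the control of the class --- to fail, and fatally, already in degree $d=1$, where $M_1=(R^n)^G$ is the module of invariant vectors, an arbitrary saturated $G$-submodule of $R^n$ and so of no predetermined class. Over $R=\mathbb Z[\sqrt{-5}]$, take $\mathfrak p=(2,1+\sqrt{-5})$; since $\mathfrak p^2=(2)$ is principal, the determinant of $\mathfrak p\oplus\mathfrak p$ is free and hence $\mathfrak p\oplus\mathfrak p\cong R^2$. Fixing such an isomorphism and letting $G=\mathbb Z/2$ act as $+1$ on the first summand and $-1$ on the second realizes a representation $G\to Gl_2(R)$ with $R[x_1,x_2]_1^G\cong\mathfrak p$, which is not free; so the conjecture as stated is false. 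Running the same construction over a Dedekind domain in which $2$ is a unit but $\mathrm{Cl}(R)$ still has a class of order two --- for instance $R=\mathcal O_F[\tfrac12]$ with $F=\mathbb Q(\sqrt{-35})$, where $2$ is inert and $\mathrm{Cl}(R)\cong\mathbb Z/2$ --- even makes $\sigma$ a reflection and $G$ a pseudoreflection group with $|G|$ invertible in $R$, so the motivating special case fails as well. The honest conclusion is that the conjecture survives only after restricting the base to rings of class number one, in which case it is Theorem \ref{StructureGradAlgFree}.
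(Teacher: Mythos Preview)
The statement in question is a \emph{conjecture} in the paper; the paper offers no proof. Your proposal is not a proof attempt but a refutation, and the refutation is correct.

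Your first construction works: with $R=\mathbb Z[\sqrt{-5}]$ and $\mathfrak p=(2,1+\sqrt{-5})$ one has $[\mathfrak p]^2=1$ in $\mathrm{Cl}(R)$, hence $\mathfrak p\oplus\mathfrak p\cong R^2$ by the Steinitz classification; transporting the involution $\mathrm{diag}(1,-1)$ through this isomorphism yields an honest element of $Gl_2(R)$ of order two. The induced $G$-action on $R[x_1,x_2]_1$ is the contragredient, but for an involution with eigenvalues $\pm1$ the dual module is $\mathfrak p^{-1}_+\oplus\mathfrak p^{-1}_-$, whose fixed part $\mathfrak p^{-1}$ has the same (nontrivial) class as $\mathfrak p$. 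So $R[x_1,x_2]_1^G$ is non-free and the conjecture, as stated, is false.

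Your second construction over $R=\mathcal O_{\mathbb Q(\sqrt{-35})}[\tfrac12]$ is also correct: here $2$ is inert (so inverting it does not kill any ideal class), $h=2$, and a prime $\mathfrak q$ above $3$ is non-principal with $\mathfrak q^2$ principal. The same $\mathrm{diag}(1,-1)$ trick produces a genuine reflection in $Gl_2(R)$ with $|G|=2\in R^\times$, and $R[x_1,x_2]_1^G\cong\mathfrak q$ is not free. This shows the conjecture fails even in the motivating pseudoreflection case with invertible group order; in that example Theorem~\ref{InvPseudoGenBlowup} still applies and the invariant ring is a tensor product of blowup algebras which is \emph{not} a polynomial ring.

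Two small corrections. First, your closing appeal to Theorem~\ref{StructureGradAlgFree} is misplaced: over a PID the freeness of each $R[x_1,\ldots,x_n]_d^G$ is immediate, since a finitely generated torsion-free module over a PID is free; Theorem~\ref{StructureGradAlgFree} takes that freeness as a hypothesis, it does not supply it. Second, ``saturated'' alone does not force the inclusion $M_d\hookrightarrow R[x_1,\ldots,x_n]_d$ to split over a Dedekind domain; what you actually use (and what is true) is that the quotient is torsion-free, hence projective, so the sequence splits for that reason.
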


\bibliographystyle{plain}
\bibliography{AInvPseudo}

\end{document}